\newcommand{\R}{{\mathbb R}}
\newcommand{\C}{{\mathbb C}}
\newcommand{\be}[1]{\begin{equation}\label{#1}}
\newcommand{\ee}{\end{equation}}
\newcommand{\ben}{\begin{eqnarray*}}
\newcommand{\een}{\end{eqnarray*}}
\renewcommand{\(}{\left(}
\renewcommand{\)}{\right)}
\newcommand{\iv}[1]{\int_{\R^3}{#1}\;dv}
\newcommand{\ix}[1]{\int_{\R^3}{#1}\;dx}
\newcommand{\ixv}[1]{\iint_{\R^3\times\R^3}{#1}\;dx\,dv}
\def\cprime{$'$}
\newcommand{\la}{\lambda}
\newcommand{\foral}{\hbox{ for all }}
\newcommand{\equ}[1]{\eqref{#1}}
\definecolor{darkgreen}{rgb}{0,0.4,0}
\renewcommand{\qed}{\unskip\null\hfill$\square$\vskip 0.3cm}
\begin{document}

\title{Relative equilibria in continuous stellar dynamics}
\titlerunning{Multiple components configurations in continuous stellar dynamics}
\date{\today}

\author{Juan Campos, Manuel del Pino, and Jean Dolbeault}
\institute{\textsf{M. del Pino:} Departamento de Ingenier\'{\i}a Matem\'atica and CMM, Universidad de Chile, Casilla 170 Correo 3, Santiago, Chile.
\email{\textsf{delpino@dim.uchile.cl}}\\
\textsf{J. Campos, J. Dolbeault:} Ceremade (UMR CNRS n$^\circ$ 7534), Universit\'e Paris-Dauphine, Place de Lattre de Tassigny, 75775 Paris C\'edex~16, France.
\email{\textsf{campos@ceremade.dauphine.fr, dolbeaul@ceremade.dauphine.fr}}}

\date{\today}
\maketitle
\thispagestyle{empty}

\begin{abstract} We study a three dimensional continuous model of gravitating matter rotating at constant angular velocity. In the rotating reference frame, by a finite dimensional reduction, we prove the existence of non radial stationary solutions whose supports are made of an arbitrarily large number of disjoint compact sets, in the low angular velocity and large scale limit. At first order, the solutions behave like point particles, thus making the link with the \emph{relative equilibria} in $N$-body dynamics. \end{abstract}

\keywords{Stellar dynamics -- gravitation -- mass -- rotation -- angular velocity -- solutions with compact support -- radial solutions -- symmetry breaking -- orbital stability -- elliptic equations -- variational methods -- critical points -- finite dimensional reduction -- $N$-body problems -- relative equilibria}

\vspace*{6pt}\noindent{\bf Mathematical subject classification (2000).} Primary: 35J20; Secondary: 35B20, 35B38, 35J60, 35J70, 35Q35, 76P05, 76S05, 82B40 \bigskip
%
%
%
%
%
%
%
%
%

\section{Introduction and statement of the main results}\label{Sec:Intro}

We consider the Vlasov-Poisson system
\be{Syst:VP}
\left\{\begin{array}{l}
\displaystyle\partial_tf+v\cdot\nabla_xf-\nabla_x\phi\cdot\nabla_vf=0\\ \\
\displaystyle \phi=-\frac 1{4\pi\,|\cdot|}*\rho\;,\quad \rho :=\iv f
\end{array}\right.
\ee
which models the dynamics of a cloud of particles moving under the action of a mean field gravitational potential $\phi$ solving the Poisson equation: \hbox{$\Delta\phi=\rho$}. Kinetic models like system \equ{Syst:VP} are typically used to describe gaseous stars or globular clusters. Here $f=f(t,x,v)$ is the so-called \emph{distribution function,\/} a nonnegative function in $L^\infty(\R,L^1(\R^3\times\R^3))$ depending on \emph{time} $t\in\R$, \emph{position} $x\in\R^3$ and \emph{velocity} $v\in\R^3$, which represents a density of particles in the \emph{phase space,\/} $\R^3\times\R^3$. The function $\rho$ is the \emph{spatial density\/} function and depends only on $t$ and $x$. The total mass is conserved and hence
\[
\ixv{f(t,x,v)}=\ix{\rho(t,x)}=M
\]
does not depend on $t$.

\medskip The first equation in \equ{Syst:VP} is the \emph{Vlasov equation,\/} also known as the \emph{collisionless Boltzmann equation\/} in the astrophysical literature; see \cite{Binney-Tremaine}. It is obtained by writing that the mass is transported by the flow of Newton's equations, when the gravitational field is computed as a mean field potential. Reciprocally, the dynamics of discrete particle systems can be formally recovered by considering empirical distributions, namely measure valued solutions made of a sum of Dirac masses, and neglecting the self-consistent gravitational terms associated to the interaction of each Dirac mass with itself.

It is also possible to relate \equ{Syst:VP} with discrete systems as follows. Consider the case of $N$ gaseous spheres, far away one to each other, in such a way that they weakly interact through gravitation. In terms of system \equ{Syst:VP}, such a solution should be represented by a distribution function $f$, whose space density~$\rho$ is compactly supported, with several nearly spherical components. At large scale, the location of these spheres is governed at leading order by the $N$-body gravitational problem.

The purpose of this paper is to unveil this link by constructing a special class of solutions: we will build time-periodic, non radially symmetric solutions, which generalize to kinetic equations the notion of {\em relative equilibria} for the discrete $N$-body problem. Such solutions have a planar solid motion of rotation around an axis which contains the center of gravity of the system, so that the centrifugal force counter-balances the attraction due to gravitation. Let us give some details.

\medskip Consider $N$ point particles with masses $m_j$, located at points $x_j(t)\in\R^3$ and assume that their dynamics is governed by Newton's gravitational equations
\be{Eq:Newton}
m_j\,\frac{d^2x_j}{dt^2}=\sum_{j\neq k=1}^N\frac{m_j\,m_k}{4\pi}\,\frac{x_k-x_j}{|x_k-x_j|^3}\;,\quad j=1,\ldots N\;.
\ee
Let us write $x\in \R^3$ as $x= (x',x^3)\in\R^2\times\R\approx\C\times\R$ where, using complex notations, $x' = (x^1,x^2)\approx x^1 + i\,x^2$ and rewrite system \equ{Eq:Newton} in coordinates relative to a reference frame rotating at a constant velocity $\omega>0$ around the $x^3$-axis. This amounts to carry out the change of variables
\[
x= (e^{i\,\omega\,t}\,z',\,z^3), \quad z' = z^1+ i\,z^2\,.
\]
In terms of the coordinates $(z',z^3)$, system \equ{Eq:Newton} then reads
\be{poto}
\frac{d^2z_j}{dt^2}=\sum_{j\neq k=1}^N\frac{m_k}{4\pi}\,\frac{z_k-z_j}{|z_k-z_j|^3}+\omega^2\,(z_j',0)+\,2\,\omega\,\(i\,\tfrac{dz_j'}{dt},0\)\;,\quad j=1,\ldots N\;.
\ee
We consider solutions which are stationary in the rotating frame, namely constant solutions $(z_1,\ldots z_N)$ of system \equ{poto}. Clearly all $z_j$'s have their third component with the same value, which we assume zero. Hence, we have that
\[
z_k= (\xi_k, 0)\;,\quad \xi_k\in \C\;,
\]
where the $\xi_k$'s are constants and satisfy the system of equations
\be{ll}
\sum_{k\neq j=1}^N\frac{m_k}{4\pi}\,\frac{\xi_k-\xi_j}{|\xi_k-\xi_j|^3}+\,\omega^2\,\xi_j\;,\quad j=1,\ldots N\;.
\ee

In the original reference frame, the solution of~\equ{Eq:Newton} obeys to a rigid motion of rotation around the center of mass, with constant angular velocity $\omega$. This solution is known as a \emph{relative equilibrium}, thus taking the form
\[\label{xj}
x_j^\omega(t) = (e^{i\,\omega\,t}\,\xi_j,0 )\;,\quad \xi_j\in \C\;, \quad j=1,\ldots N\;.
\]
System \equ{ll} has a variational formulation. In fact a vector $(\xi_1,\ldots \xi_N)$ solves \equ{ll} if and only if it is a critical point of the function
\[
\mathcal V _m^\omega(\xi_1,\ldots \xi_N)\;:=\;\frac{1}{8\pi}\sum_{j\neq k=1}^N\frac{m_j\,m_k}{|\xi_k-\xi_j|}+ \frac{\omega^2}2\sum_{j=1}^Nm_j\,|\xi_j|^2\,.
\]
Here $m$ denotes $(m_j)_{j=1}^N$. A further simplification is achieved by considering the scaling
\be{Eqn:ScalingOmega}
\xi_j = \omega^{-2/3}\,\zeta_j\;,\quad\mathcal V _m^\omega(\xi_1,\ldots \xi_N) = \omega^{2/3}\,\mathcal V _m (\zeta_1,\ldots \zeta_N)
\ee
where
\[\label{V}
\mathcal V _m(\zeta_1,\ldots \zeta_N)\;:=\;\frac{1}{8\pi}\sum_{j\neq k=1}^N\frac{m_j\,m_k}{|\zeta_k- \zeta_j|}+ \frac{1}2\sum_{j=1}^Nm_j\,|\zeta_j|^2\,.
\]
This function has in general many critical points, which are all \emph{relative equilibria.} For instance, $\mathcal V_m$ clearly has a global minimum point.

\medskip Our aim is to construct solutions of gravitational models in continuum mechanics based on the theory of relative equilibria. We have the following result.
\begin{theorem}\label{teo1} Given masses $m_j$, $j=1,\ldots N$, and any sufficiently small $\omega >0$, there exists a solution $f_\omega (t,x,v)$ of equation \equ{Syst:VP} which is $\frac {2\pi}\omega$-periodic in time and whose spatial density takes the form
\[\label{rho}
\rho(t, x):= \iv{f_\omega}\,= \sum_{i=1}^N \rho_j( x- x_j^\omega(t) )\,+\,o(1)\;.
\]
Here $o(1)$ means that the remainder term uniformly converges to $0$ as \hbox{$\omega\to 0_+$} and identically vanishes away from $\cup_{j=1}^NB_R(x_j^\omega(t))$, for some $R>0$, independent of $\omega$. The functions $\rho_j(y)$ are non-negative, radially symmetric, non-increasing, compactly supported functions, independent of $\omega$, with $\int_{\R^3} \rho_j (y)\;dy\,=\,m_j$ and the points $x_j^\omega(t)$ are such that
\[\label{xj1}
x_j^\omega(t) = \omega^{-2/3}\,(e^{i\,\omega\,t}\,\zeta^\omega_j,0 )\;,\quad \zeta_j^\omega\in \C\;, \quad j=1,\ldots N
\]
and
\[
\lim_{\omega\to 0_+}{\mathcal V}_m (\zeta_1^\omega,\ldots \zeta_N^\omega)=\min_{ \C^N} {\mathcal V}_m\;, \quad \lim_{\omega\to 0_+}\nabla {\mathcal V}_m (\zeta_1^\omega,\ldots \zeta_N^\omega)=0\;.
\]
\end{theorem}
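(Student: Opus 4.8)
The plan is to construct the solution by a Lyapunov–Schmidt finite dimensional reduction, building the spatial density as a perturbation of a superposition of $N$ rescaled, well-separated radial profiles. I would first pass to the rotating frame, where a time-periodic solution of \equ{Syst:VP} becomes a stationary solution, and seek $f$ of the form $f=\gamma(-\tfrac12|v|^2-\phi_{\mathrm{eff}})$ for a suitable profile function $\gamma$, where $\phi_{\mathrm{eff}}=\phi-\tfrac{\omega^2}2|x'|^2$ incorporates the centrifugal potential. Integrating in $v$ reduces the problem to a single semilinear elliptic equation of the form $-\Delta\phi=\rho=g(\phi_{\mathrm{eff}})$, a Poisson–Emden-type equation, for which each isolated radial bump $\rho_j$ arises as a compactly supported radial solution of the unscaled ($\omega=0$) problem with prescribed mass $m_j$. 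After the scaling \equ{Eqn:ScalingOmega}, the $N$ bumps are placed at positions $\omega^{-2/3}(\zeta_j,0)$ that diverge as $\omega\to0_+$, so at leading order the bumps decouple and interact only through the far-field $1/|x|$ tails of their potentials.

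The core of the argument is the reduction itself. I would fix the centers at trial positions $\zeta=(\zeta_1,\dots,\zeta_N)$ and write the ansatz $\rho=\sum_j\rho_j(\cdot-x_j)+\psi$, then project the elliptic equation onto the tangent space generated by the translations $\partial\rho_j/\partial\zeta_j$ (the approximate kernel coming from the translation invariance of each isolated bump) and its $L^2$-orthogonal complement. On the orthogonal complement the linearized operator is invertible with a bound uniform in $\omega$, because each isolated radial profile is nondegenerate up to translations; this is the step where I would invoke a contraction mapping / implicit function theorem argument to solve for $\psi=\psi(\zeta,\omega)$ of size $o(1)$, using the weak, long-range $1/|x|$ coupling between distant bumps to control cross terms. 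The remaining $2N$ bifurcation equations are the projections onto the kernel directions, and the key point is that this finite dimensional system is, to leading order in $\omega$, precisely $\nabla_\zeta\mathcal V_m(\zeta)=0$: the centrifugal term contributes $\tfrac12\sum_j m_j|\zeta_j|^2$ and the pairwise Newtonian tails contribute $\tfrac1{8\pi}\sum_{j\neq k}m_jm_k/|\zeta_k-\zeta_j|$, reproducing the reduced energy $\mathcal V_m$ of \equ{V}.

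To close the existence, I would apply a variational or degree-theoretic argument to the reduced functional: since $\mathcal V_m$ is coercive (the centrifugal term dominates at infinity) and blows up on the collision set, it attains its global minimum at an interior critical point $\zeta^\star$, and a minimax or topological-degree count that is stable under the $o(1)$ perturbation produces a genuine critical point $\zeta^\omega$ of the full reduced functional near $\zeta^\star$, satisfying $\mathcal V_m(\zeta^\omega)\to\min\mathcal V_m$ and $\nabla\mathcal V_m(\zeta^\omega)\to0$ as claimed. Unwinding the scaling and reverting to the original inertial frame then yields the $\tfrac{2\pi}\omega$-periodic solution $f_\omega$ with the stated asymptotic density, the compact support and the $o(1)$ remainder being inherited from the compact support of the profiles $\rho_j$ and the uniform smallness of $\psi$.

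The main obstacle I anticipate is the uniform invertibility of the linearized operator on the orthogonal complement as $\omega\to0_+$. The difficulty is twofold: the compact support of the profiles means the profile function $\gamma$ (hence the nonlinearity $g$) need only be, for instance, a truncated power, so the linearization may fail to be smooth across the free boundary of each bump, and one must control the linearized operator in function spaces adapted to compactly supported densities rather than on all of $\R^3$; and the diverging separation of the centers makes the geometry noncompact, so the coupling and the remainder estimates must be carried out in weighted norms that simultaneously resolve each local bump and the global $1/|x|$ interaction without the error terms degenerating as the bumps move apart. Establishing the nondegeneracy of a single compactly supported radial profile, and quantifying how the far-field interaction perturbs the spectrum of the decoupled operator, is where the technical heart of the proof will lie.
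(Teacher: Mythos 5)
Your proposal follows essentially the same route as the paper: pass to the rotating frame, reduce to a semilinear Poisson--Emden equation for the potential, perform a Lyapunov--Schmidt reduction around a superposition of compactly supported radial bumps whose approximate kernel consists of translation modes (the nondegeneracy of the single profile, which you correctly flag as the technical heart, is exactly what the paper invokes from Flucher--Wei), and show the reduced functional equals $-\mathcal V_m^\omega$ up to errors that are small in the $C^1$ sense, so that the global-minimum configuration of $\mathcal V_m$ persists as a critical point. The only cosmetic difference is that the paper carries out the reduction at the level of the potential $u=W_\xi+\phi$ with the weighted norms $\|\cdot\|_*$ and $\|\cdot\|_{**}$, rather than at the level of the density, but this does not change the substance of the argument.
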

The solution of Theorem~\ref {teo1} has a spatial density which is nearly spherically symmetric on each component of its support and these ball-like components rotate at constant, very small, angular velocity around the $x^3$-axis. The radii of these balls are very small compared with their distance to the axis. We shall call such a solution a \emph{relative equilibrium} of \equ{Syst:VP}, by extension of the discrete notion. The construction provides much more accurate informations on the solution. In particular, the building blocks $\rho_j$ are obtained as minimizers of an explicit reduced free energy functional, under suitable mass constraints.

\medskip It is also natural to consider other discrete relative equilibria, namely critical points of the energy $\mathcal V_m$ that may or may not be globally minimizing, and ask whether associated relative equilibria of system \equ{Syst:VP} exist. There are plenty of relative equilibria of the $N$-body problem. For instance, if all masses $m_j$ are equal to some $m_*>0$, a critical point is found by locating the $\zeta_j$'s at the vertices of a regular polygon:
\be{polygon}
\zeta_j = r\,e^{2\,i\,\pi (j-1)/N}\;,\quad j=1,\ldots N\;,
\ee
where $r$ is such that
\[
\frac d{dr}\Big[\frac{a_N}{4\pi}\,\frac{m_*}r+\frac 12\,r^2\Big]=0\quad\mbox{with}\quad a_N:=\frac 1{\sqrt 2}\,\sum_{j=1}^{N-1}\frac 1{\sqrt{1-\cos\(2\pi j/N\)}}\;,
\]
\emph{i.e.} $r=\(a_N\,m_*/(4\pi)\)^{1/3}$. This configuration is called the \emph{Lagrange solution,\/} see~\cite{perkowalter}. The counterpart in terms of continuum mechanics goes as follows.
\begin{theorem}\label{Thm:Lagrange} Let $(\zeta_{1},\ldots\zeta_N)$ be a regular polygon, namely with $\zeta_j$ given by \equ{polygon}, and assume that all masses are equal. Then there exists a solution $f_\omega$ exactly as in Theorem \ref{teo1}, but with $\lim_{\omega \to 0_+}(\zeta_1^\omega,\ldots \zeta_N^\omega )=(\zeta_1,\ldots \zeta_N)$. \end{theorem}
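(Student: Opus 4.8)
\medskip\noindent\emph{Proof proposal.} The plan is to re-use, unchanged, the finite-dimensional reduction that produces the solutions of Theorem~\ref{teo1}, and to replace the \emph{unconstrained} minimization of $\mathcal V_m$ by a minimization over the configurations sharing the symmetry of the polygon. This detour is forced upon us because the regular polygon \equ{polygon} is, in general, not a global minimizer of $\mathcal V_m$, so the variational selection used in Theorem~\ref{teo1} cannot pick it out directly; what makes it accessible is that it is the \emph{unique} critical configuration compatible with a cyclic rotational symmetry, within which class it is trivially a minimizer.

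First I would recall that the reduction attached to Theorem~\ref{teo1} turns the construction of $f_\omega$ into the search for critical points, in a fixed neighborhood of a well-separated $N$-point configuration, of a reduced functional $J_\omega$ on an open subset of $\C^N$ which, after the scaling \equ{Eqn:ScalingOmega}, converges to $\mathcal V_m$ in $C^1_{\mathrm{loc}}$ as $\omega\to 0_+$. Since all masses are equal, the building blocks $\rho_j$ can all be taken to be one and the same radial profile, so the ansatz, and hence $J_\omega$, is invariant under the two symmetries of the problem written in the rotating frame: the diagonal circle action $(\zeta_1,\dots,\zeta_N)\mapsto(e^{i\theta}\zeta_1,\dots,e^{i\theta}\zeta_N)$, and the cyclic group $\Gamma\cong\mathbb{Z}/N\mathbb{Z}$ generated by
\[
\sigma\colon(\zeta_1,\dots,\zeta_N)\longmapsto\(e^{2\,i\,\pi/N}\,\zeta_N,\,e^{2\,i\,\pi/N}\,\zeta_1,\,\dots,\,e^{2\,i\,\pi/N}\,\zeta_{N-1}\)\,,
\]
which couples a rotation by $2\pi/N$ with a cyclic index shift and is a symmetry of $\mathcal V_m$ precisely because the masses coincide.

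Next I would restrict $J_\omega$ to the fixed-point set $\mathrm{Fix}(\Gamma)=\{\zeta:\sigma\,\zeta=\zeta\}$, which consists exactly of the regular polygons $\zeta_j=e^{2\,i\,\pi(j-1)/N}\,\zeta_1$ and is thus a copy of $\C$ parametrized by $\zeta_1$. On $\mathrm{Fix}(\Gamma)$ the circle-invariance forces $\mathcal V_m$ to depend only on $r=|\zeta_1|$, with
\[
g(r)=\frac{c_N}{r}+\frac{N\,m_*}2\,r^2\,,\qquad c_N>0\ \text{an explicit positive multiple of }a_N\,m_*^2\,,
\]
so that $g(r)\to+\infty$ as $r\to0_+$ and as $r\to\infty$, and $g$ has a unique minimizer $r_*$, namely the polygon radius of \equ{polygon}. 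Since $J_\omega\to\mathcal V_m$ in $C^1_{\mathrm{loc}}$, for $\omega$ small the function $J_\omega|_{\mathrm{Fix}(\Gamma)}$ is again a circle-invariant function of $r$ whose minimum over a fixed compact neighborhood of $r_*$ is attained at an interior point $r_\omega\to r_*$; this produces a polygon $\zeta^\omega\in\mathrm{Fix}(\Gamma)$ minimizing $J_\omega$ on $\mathrm{Fix}(\Gamma)$ and converging to $(\zeta_1,\dots,\zeta_N)$. By the principle of symmetric criticality (Palais)---applicable because $\Gamma$ acts orthogonally and $J_\omega$ is $\Gamma$-invariant, so that $\nabla J_\omega(\zeta^\omega)\in\mathrm{Fix}(\Gamma)$ while its $\mathrm{Fix}(\Gamma)$-component vanishes at the constrained minimum---$\zeta^\omega$ is a genuine critical point of $J_\omega$, which therefore yields a solution $f_\omega$ of \equ{Syst:VP} of the form described in Theorem~\ref{teo1} with $\lim_{\omega\to0_+}(\zeta_1^\omega,\dots,\zeta_N^\omega)=(\zeta_1,\dots,\zeta_N)$.

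The main obstacle I anticipate is not the variational selection, which is elementary once the symmetry is in place, but the verification that the reduction of Theorem~\ref{teo1} can be carried out \emph{equivariantly}: one must check that the cut-offs, the approximate solution and the Lyapunov--Schmidt projections defining $J_\omega$ all commute with the actions of $\Gamma$ and of the circle, so that $J_\omega$ is honestly invariant and $\mathrm{Fix}(\Gamma)$ is preserved by its gradient. A secondary point to control is that the constrained minimizer stays inside the region where the reduction is valid; this is automatic here, since $r_*$ is a fixed positive number and the corresponding polygon is uniformly well separated, independently of $\omega$.
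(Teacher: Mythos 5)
Your proposal is correct, and it places the symmetry in a genuinely different spot than the paper does. The paper symmetrizes \emph{upstream}, at the PDE level: it looks for potentials invariant under the discrete rotation $\mathcal R_N$ and under the reflections $x^2\mapsto -x^2$, $x^3\mapsto -x^3$, which amounts to working on the fundamental sector $\Omega_1$ with Neumann conditions and a \emph{single} cluster at $\zeta_1=(r,0,0)$; the identity $\mathsf J\,[u]=N\,\mathsf J_1[u]$ reduces everything to the scalar parameter $r$, the symmetry class forces $c_{12}=c_{13}=0$ automatically, and the remaining multiplier is killed by the interior extremum in $r$, exactly as in the proof of Theorem~\ref{teo1}. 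You instead symmetrize \emph{downstream}: you keep the full $N$-cluster reduction untouched, note that with equal masses the reduced functional (the paper's $\Gamma$ of Proposition~\ref{Prop:Summary}, your $J_\omega$ --- beware the notational clash with your group $\Gamma$) is invariant under the rotation-plus-index-shift $\sigma$, minimize on its fixed-point set of polygons, and conclude by finite-dimensional symmetric criticality. The equivariance you flag as the main obstacle is indeed the one thing to check, and it holds by precisely the uniqueness argument behind the rotation-invariance lemma at the end of Section~\ref{Sec:Projection}: equal masses give equal $\la_i$, hence $W_{\sigma\xi}=W_\xi\circ\mathcal R_N^{-1}$, the family $\{Z_{ij}\,\chi_i\}$ is mapped into itself up to the linear mixing of $Z_{i1},Z_{i2}$ under rotation, and uniqueness in Lemma~\ref{nonlinear} yields $\phi_{\sigma\xi}=\phi_\xi\circ\mathcal R_N^{-1}$, so $\Lambda(\sigma\xi)=\Lambda(\xi)$; your use of circle invariance to dispose of the angular degeneracy plays the role of Remark~\ref{remi}. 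As for what each route buys: the paper's quotient construction deals with one cluster and a one-dimensional reduced problem, but tacitly requires re-running the linear and nonlinear theory in the symmetric class on a sector (details it leaves to the reader), whereas your route reuses Proposition~\ref{Prop:Summary} as a black box and adds only elementary finite-dimensional arguments, at the cost of the equivariance verification above.
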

Further examples of \emph{relative equilibria} in the $N$-body problem can be obtained for instance by setting $N-1$ points particles of the same mass at the vertices of a regular polygon centered at the origin, then adding one more point particle at the center (not necessarily with the same mass), and finally adjusting the radius. Another family of solutions, known as the \emph{Euler--Moulton solutions} is constituted by arrays of aligned points.

\medskip Critical points of the functional ${\mathcal V}_m$ are always degenerate because of their invariance under rotations: for any $\alpha\in \R$ we have
\[
{\mathcal V}_m(\zeta_1,\ldots \zeta_N) = {\mathcal V}_m( e^{i\,\alpha}\,\zeta_1,\ldots e^{i\,\alpha}\,\zeta_N)\;.
\]
Let $\bar \zeta= (\bar \zeta_1,\ldots \bar \zeta_N)$ be a critical point of ${\mathcal V}_m$ with $\bar \zeta_\ell \ne 0$. After a uniquely defined rotation, we may assume that $\bar \zeta_{\ell 2}=0$. Moreover, we have a critical point of the function of $2N-1$ real variables
\[
\tilde {\mathcal V}_m(\zeta_1,\ldots \zeta_{\ell 1},\ldots \zeta_N)\,:=\,{\mathcal V}_m(\zeta_1, \ldots (\zeta_{\ell 1},0),\ldots \zeta_N)\;.
\]
We shall say that a critical point of ${\mathcal V}_m$ is {\em non-degenerate up to rotations} if the matrix $D^2\tilde {\mathcal V}_m(\bar \zeta_1,\ldots \bar \zeta_{\ell 1},\ldots \bar \zeta_N)$ is non-singular. This property is clearly independent of the choice of $\ell$.

Palmore in \cite{MR0363076,MR0413647,MR0413646,MR0445539,MR0420713} has obtained classification results for the relative equilibria. In particular, it turns out that for {\em almost every choice of masses $m_j$,} all critical points of the functional ${\mathcal V}_m$ are non-degenerate up to rotations. Moreover, in such a case there exist at least $[2^{N-1}(N-2)+1]\,(N-2)\,!$ such distinct critical points. Many other results on \emph{relative equilibria} are available in the literature. We have collected some of them in \hbox{\ref{Appendix:RE}} with a list of relevant references. These results have a counterpart in terms of relative equilibria of system \equ{Syst:VP}.
\begin{theorem}\label{teo2} Let $(\zeta_{1},\ldots \zeta_N)$ be a non-degenerate critical point of ${\mathcal V}_m$ up to rotations. Then there exists a solution $f_\omega$ as in Theorem \ref{teo1}, which satisfies, as in Theorem~\ref{Thm:Lagrange}, $\lim_{\omega \to 0_+}(\zeta_1^\omega,\ldots \zeta_N^\omega )=(\zeta_1,\ldots \zeta_N)$. \end{theorem}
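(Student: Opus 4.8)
The strategy is to rely on the finite-dimensional reduction that already underlies Theorems \ref{teo1} and \ref{Thm:Lagrange}, and to change only the final selection step. That reduction produces, for each small $\omega>0$, a reduced energy functional $\mathcal J_\omega$ defined on an open set of admissible configurations $\zeta=(\zeta_1,\dots,\zeta_N)\in\C^N$ bounded away from the collision set $\{\zeta_j=\zeta_k,\ j\ne k\}$ and from the axis $\{\zeta_\ell=0\}$, whose critical points are in one-to-one correspondence with genuine relative equilibria $f_\omega$ of \equ{Syst:VP} having the structure of Theorem \ref{teo1}. The essential feature is that, after the scaling $\xi=\omega^{-2/3}\zeta$ introduced in the excerpt, this functional is a small perturbation of the $N$-body potential: one has $\omega^{-2/3}\mathcal J_\omega\to\mathcal V_m$ as $\omega\to0_+$, locally uniformly on the admissible set, up to an additive constant carrying the self-energies of the building blocks $\rho_j$. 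For the global minimum a minimization argument suffices; here, since $\bar\zeta$ need not minimize, I would instead exploit the non-degeneracy hypothesis through the implicit function theorem.

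First I would record that the whole construction can be carried out $S^1$-equivariantly: the rotating-frame problem is exactly invariant under $\zeta\mapsto e^{i\,\alpha}\,\zeta$, the rotation about the $x^3$-axis, and the approximate solutions together with their corrections can be chosen so that $\mathcal J_\omega$ inherits this invariance exactly. Consequently it suffices to find a critical point of the restricted functional $\tilde{\mathcal J}_\omega$ of the $2N-1$ real variables obtained, exactly as above, by fixing the gauge $\zeta_{\ell 2}=0$; and $\omega^{-2/3}\tilde{\mathcal J}_\omega\to\tilde{\mathcal V}_m$ in the same local sense.

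Second, and this is the decisive point, I would upgrade this convergence to the level of derivatives, proving that $\omega^{-2/3}\nabla\tilde{\mathcal J}_\omega\to\nabla\tilde{\mathcal V}_m$ and $\omega^{-2/3}D^2\tilde{\mathcal J}_\omega\to D^2\tilde{\mathcal V}_m$ locally uniformly near $\bar\zeta$. Since $D^2\tilde{\mathcal V}_m(\bar\zeta)$ is non-singular by hypothesis, the map $G_\omega:=\omega^{-2/3}\nabla\tilde{\mathcal J}_\omega$ is, for $\omega$ small, a $C^1$ perturbation of $\nabla\tilde{\mathcal V}_m$ with invertible linearization at $\bar\zeta$; the implicit function theorem, or equivalently a contraction-mapping or Brouwer-degree argument on a small ball around $\bar\zeta$, then yields for each small $\omega$ a zero $\zeta^\omega$ of $\nabla\tilde{\mathcal J}_\omega$ with $\zeta^\omega\to\bar\zeta$ as $\omega\to0_+$. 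Undoing the gauge fixing, $\zeta^\omega$ is a critical point of $\mathcal J_\omega$ up to rotation, hence corresponds to a relative equilibrium $f_\omega$ enjoying all the properties of Theorem \ref{teo1}, with $\lim_{\omega\to0_+}(\zeta_1^\omega,\dots,\zeta_N^\omega)=(\zeta_1,\dots,\zeta_N)$.

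The main obstacle is precisely the differentiated convergence of the third step. One must differentiate, with respect to the location parameters, the solution of the auxiliary projected problem that defines the correction to the sum of bumps, and control these parameter derivatives uniformly as $\omega\to0_+$. This demands estimating the bump-bump interactions one order beyond what the minimization argument of Theorem \ref{teo1} requires, and checking that the remainder stays of higher order after two differentiations in $\zeta$, which rests on the uniform invertibility of the linearized operator on the orthogonal complement of the translation modes. Once this $C^1$, respectively $C^2$, closeness of $\tilde{\mathcal J}_\omega$ to $\tilde{\mathcal V}_m$ is secured, the non-degeneracy hypothesis makes the concluding perturbation argument entirely standard.
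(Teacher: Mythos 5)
Your proposal follows the same architecture as the paper's proof: the same finite-dimensional reduction (Proposition~\ref{Prop:Summary}), the same exact $S^1$-equivariance of the reduced functional, the same gauge fixing $\zeta_{\ell 2}=0$ leading to the restricted functional $\tilde\Gamma$ (your $\tilde{\mathcal J}_\omega$), and the same use of rotation invariance (Remark~\ref{remi}) to undo the gauge at the end. The one substantive difference is the perturbation argument you put at the core. Your primary route---implicit function theorem or contraction mapping---requires the Hessian convergence $\omega^{-2/3}D^2\tilde{\mathcal J}_\omega\to D^2\tilde{\mathcal V}_m$, which you yourself flag as ``the main obstacle'': it forces a second differentiation of the corrector $\phi_\xi$ with respect to $\xi$. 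The paper never proves, and never needs, any such $C^2$ estimate. Its proof of Theorem~\ref{teo2} runs on Brouwer degree alone: a critical point that is non-degenerate up to rotations is an isolated zero of $\nabla\tilde{\mathcal V}_m$ with non-zero local degree, and the local degree is stable under uniform ($C^0$) perturbation of the gradient field; hence the estimate $\nabla\tilde\Gamma=-\nabla\tilde{\mathcal V}_m+O(\omega^{2/3})$ of Proposition~\ref{Prop:Summary}---exactly the estimate already established for Theorem~\ref{teo1}---suffices to produce a zero $\zeta^\omega$ of $\nabla\tilde\Gamma$ arbitrarily close to $\bar\zeta$. So your phrase ``or equivalently a \dots Brouwer-degree argument'' understates the point: degree is strictly less demanding than the IFT here, and committing to it makes your main obstacle disappear entirely, reducing your proposal to the paper's proof. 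This is not merely a question of economy. For $1<p<2$ the coefficient $s\mapsto p\,s_+^{p-1}$ of the linearized operator is H\"older but not Lipschitz, so differentiating the projected problem twice in $\xi$ raises genuine regularity problems near the free boundary, beyond the first-order control of Lemma~\ref{derivative}; it is not clear that the $C^2$ closeness you postulate even holds in the full range of exponents, whereas the $C^1$ closeness needed for the degree argument is available for all $p>1$.
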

This paper is organized as follows. In the next section, we explain how the search for relative equilibria for the Vlasov-Poisson system can be reduced to the study of critical points of a functional acting on the gravitational potential. The construction of these critical points is detailed in Section~\ref{Sec:Results}. Sections \ref{Sec:Linear} and \ref{Sec:Projection} are respectively devoted to the linearization of the problem around a superposition of solutions of the problem with zero angular velocity, and to the existence of a solution of a nonlinear problem with appropriate orthogonality constraints depending on parameters $(\xi_j)_{j=1}^N$ related to the location of the $N$ components of the support of the spatial density. Solving the original problem amounts to make all corresponding Lagrange multipliers equal to zero, which is equivalent to find a critical point of a function depending on $(\xi_j)_{j=1}^N$: this is the variational reduction described in Section~\ref{Sec:Reduction}. The proof of Theorems~\ref{teo1}, \ref {Thm:Lagrange} and \ref{teo2} is given in Section~\ref{Sec:Proofs} while known results on relative equilibria for the $N$-body, discrete problem are summarized \hbox{in~\ref{Appendix:RE}}.

\section{The setup}\label{Sec:Setup}

Guided by the representation \equ{poto} of the $N$-body problem in a rotating frame,
we change variables in equation \equ{Syst:VP}, replacing $x=(x',x^3)$ and $v=(v',v^3)$ respectively by
\[
(e^{i\,\omega\,t}\,x', x^3)\quad \hbox{and}\quad (i\,\omega\,x'+ e^{i\,\omega\,t}\,v',v^3)\;.
\]
Written in these new coordinates, Problem \equ{Syst:VP} becomes
\be{Eqn:Kinetic}
\left\{\begin{array}{l}
\frac{\partial f}{\partial t}+v\cdot\nabla_x f-\nabla_xU\cdot\nabla_vf-\omega^2\,x'\cdot\nabla_{v'}f+2\,\omega\,i\,v'\cdot\nabla_{v'}f=0\;, \\ \\
\displaystyle U=-\frac 1{4\pi\,|\cdot|}*\rho\;,\quad \rho:= \iv f\;.
\end{array}\right.
\ee
The last two terms in the equation take into account the centrifugal and Coriolis force effects. System \equ{Eqn:Kinetic} can be regarded as the continuous version of problem~\equ{poto}. Accordingly, a {\em relative equilibrium} of System \equ{Syst:VP} will simply correspond to a stationary state of \equ{Eqn:Kinetic}.

Such stationary solutions of \equ{Eqn:Kinetic} can be found by considering for instance critical points of the \emph{free energy} functional
\[
\mathcal F[f]:=\ixv{\kern-4pt\beta(f)}+\frac 12\ixv{\kern-4pt\(|v|^2-\omega^2\,|x'|^2\)f}-\frac 12\ix{\kern-4pt|\nabla U|^2}
\]
for some arbitrary convex function $\beta$, under the mass constraint
\[
\ixv f=M\;.
\]
A typical example of such a function is
\be{PolytropicCase}
\beta(f)=\frac 1q\,\kappa_q^{q-1}\,f^q
\ee
for some $q\in(1,\infty)$ and some positive constant $\kappa_q$, to be fixed later. The corresponding solution is known as the solution of the \emph{polytropic gas model\/}, see \cite{bfh,MR965418,Binney-Tremaine,SaSo,MR1668556}.

\medskip When dealing with stationary solutions, it is not very difficult to rewrite the problem in terms of the potential. A critical point of $\mathcal F$ under the mass constraint $\ixv f=M$ is indeed given in terms of $U$ by
\be{Eq:f}
f(x,v)=\gamma\(\lambda+\tfrac 12\,|v|^2+ U(x)-\tfrac 12\,\omega^2\,|x'|^2\)
\ee
where $\gamma$ is, up to a sign, an appropriate generalized inverse of $\beta'$. In case \equ{PolytropicCase}, $\gamma(s)=\kappa_q^{-1}\,(-s)_+^{1/(q-1)}$, where $s_+=(s+|s|)/2$ denotes the positive part of~$s$. The parameter $\lambda$ stands for the Lagrange multiplier associated to the mass constraint, at least if $f$ has a single connected component. At this point, one should mention that the analysis is not exactly as simple as written above. Identity~\equ{Eq:f} indeed holds only component by component of the support of the solution, if this support has more than one connected component, and the Lagrange multipliers have to be defined for each component. The fact that
\[
U(x)\mathop{\sim}_{|x|\to\infty}-\frac{M}{4\pi\,|x|}
\]
is dominated by $-\frac 12\,\omega^2\,|x'|^2$ as $|x'|\to\infty$ is also a serious cause of trouble, which clearly discards the possibility that the free energy functional can be bounded from below if $\omega\neq 0$. This issue has been studied in \cite{DF2007}, in the case of the so-called \emph{flat systems.\/}

Finding a stationary solution in the rotating frame amounts to solving a non-linear Poisson equation, namely
\be{pb}
\Delta U= g\(\lambda+ U(x)-\tfrac 12\,\omega^2\,|x'|^2\)\quad\mbox{if}\;x\in\mbox{supp}(\rho)
\ee
and $\Delta U=0$ otherwise, where $g$ is defined by
\[
g(\mu):=\iv{\gamma\(\mu+\tfrac 12\,|v|^2\)}\;.
\]
Hence, the problem can also be reduced to look for a critical point of the functional
\[
\mathcal J[U]:=\frac 12\ix{|\nabla U|^2}+\ix{G\(\lambda+ U(x)-\tfrac 12\,\omega^2\,|x'|^2\)}-\ix{\lambda\,\rho}\;,
\]
where $\lambda=\lambda[x,U]$ is now a functional which is constant with respect to $x$, with value $\lambda_i$, on each connected component $K_i$ of the support of $\rho(x)=g\(\lambda[x,U]+ U(x)-\frac 12\,\omega^2\,|x'|^2\)$, and implicitly determined by the condition
\[
\int_{K_i}g\(\lambda_i+ U(x)-\tfrac 12\,\omega^2\,|x'|^2\)\,dx=m_i\;.
\]
By $G$, we denote a primitive of $g$ and the total mass is $M=\sum_{i=1}^Nm_i$. Hence we can rewrite $\mathcal J$ as
\[
\mathcal J[U]=\frac 12\ix{|\nabla U|^2}+\sum_{i=1}^N\left[\,\int_{K_i}{\kern -3pt G\(\lambda_i+ U(x)-\tfrac 12\,\omega^2\,|x'|^2\)}\;dx-m_i\,\lambda_i\right]\,.
\]

We may also observe that critical points of $\mathcal F$ correspond to critical points of the \emph{reduced free energy} functional
\[
\mathcal G[\rho]:=\ix{\Big(h(\rho)-\frac 12\,\omega^2\,|x'|^2\,\rho\Big)}-\frac 12\ix{|\nabla U|^2}
\]
acting on the spatial densities if $h(\rho)=\int_0^\rho g^{-1}(-s)\,ds$. Also notice that, using the same function~$\gamma$ as in \equ{Eq:f}, to each distribution function $f$, we can associate a \emph{local equilibrium}, or \emph{local Gibbs state},
\[
G_f(x,v)=\gamma\(\mu(\rho(x))+\tfrac 12\,|v|^2\)
\]
where $\mu$ is such that $g(\mu)=\rho$. This identity defines $\mu=\mu(\rho)=g^{-1}(\rho)$ as a function of $\rho$. Furthermore, by convexity, it follows that $\mathcal F[f]\ge\mathcal F[G_f]=\mathcal G[\rho]$ if $\rho(x)=\iv{f(x,v)}$, with equality if $f$ is a local Gibbs state. See \cite{DMOS} for more details.

\medskip Summarizing, the heuristics are now as follows. The various components $K_i$ of the support of the spatial density $\rho$ of a critical point are assumed to be far away from each other so that the dynamics of their center of mass is described by the $N$-body point particles system, at first order. On each component $K_i$, the solution is a perturbation of an isolated minimizer of the free energy functional~$\mathcal F$ (without angular rotation) under the constraint that the mass is equal to $m_i$. In the spatial density picture, on $K_i$, the solution is a perturbation of a minimizer of the reduced free energy functional~$\mathcal G$.

To further simplify the presentation of our results, we shall focus on the model of \emph{polytropic gases\/} corresponding to \equ{PolytropicCase}. In such a case, with $p:=\frac 1{q-1}+\frac 32$, $g$ is given by
\[
g(\mu)=(-\mu)_+^p
\]
if the constant $\kappa_q$ is fixed so that $\kappa_q={\scriptstyle 4\pi\sqrt 2\int_0^{+\infty}\!\sqrt t\,(1+t)^\frac 1{q-1}\,dt}={\scriptstyle (2\pi)^\frac 32\tfrac{\Gamma(\frac {q}{q-1})}{\Gamma(\frac32+\frac {q}{q-1})}}$.

\medskip Free energy functionals have been very much studied over the last years, not only to characterize special stationary states, but also because they provide a framework to deal with \emph{orbital stability,\/} which is a fundamental issue in the mechanics of gravitation. The use of a free energy functional, whose entropy part, $\ixv{\beta(f)}$ is sometimes also called the \emph{Casimir energy functional,\/} goes back to the work of V.I. Arnold (see~ \cite{MR0180051,MR0205552,MR1398637}). The variational characterization of special stationary solutions and their orbital stability have been studied by Y.~Guo and G. Rein in a series of papers \cite{MR1757074,MR1709211,MR1838751,Guo-Rein,MR1706880,MR1884728,MR1991989,GERHARDREIN06012005} and by many other authors, see for instance \cite{DF2007,0523,MR2164685,MR2424993,MR2425179,MR2390311,SaSo,MR2048565,MR1668556}.

The main drawback of such approaches is that stationary solutions which are characterized by these techniques are in some sense trivial: radial, with a single simply connected component support. Here we use a different approach to construct the solutions, which goes back to \cite{MR867665} in the context of Schr\"odinger equations. We are not aware of attempts to use dimensional reduction coupled to power-law non-linearities and Poisson force fields except in the similar case of a nonlinear Schr\"odinger equation with power law nonlinearity and repulsive Coulomb forces (see \cite{MR2232437}), or in the case of an attractive Hartree-Fock model (see \cite{krieger-2008}). Technically, our results turn out to be closely related to the ones in \cite{MR2119987,MR2180307}.

Compared to previous results on gravitational systems, the main interest of our approach is to provide a much richer set of solutions, which is definitely of interest in astrophysics for describing complex patterns like binary gaseous stars or even more complex objects. The need of such an improvement was pointed for instance in~\cite{GR2006}. An earlier attempt in this direction has been done in the framework of Wasserstein's distance and mass transport theory in \cite{MR2219334}. The point of this paper is that we can take advantage of the knowledge of special solutions of the $N$-body problem to produce solutions of the corresponding problem in continuum mechanics, which are still reminiscent of the discrete system.

\section{Construction of relative equilibria}\label{Sec:Results}

\subsection{Some notations}

We denote by $x=(x',x^3)\in\R^2\times\R$ a generic point in~$\R^3$. We may reformulate Problem \equ{pb}
in terms of the potential $u= -\,U$ as follows. Given $N$ positive numbers $\la_1,\ldots\la_N$ and a small positive parameter $\omega$, we consider the problem of finding $N$~non-empty, compact, disjoint, connected subsets~ $K_i$ of $\R^3$, $i=1,\,2\ldots N$, and a positive solution $u$ of the problem
\be{1}
-\Delta u = \sum_{i=1}^N\rho_i\quad\mbox{in}\;\R^3\,,\quad\rho_i:=\( u- \la_i + \tfrac12\,\omega^2\,|x'|^2\)_+^p\,\chi_i\;,
\ee
\be{2}
\lim_{|x|\to\infty}u(x)=0\;,
\ee
where $\chi_i$ denotes the characteristic function of $K_i$. We define the mass and the center of mass associated to each component by
\[
m_i^\omega:=\ix{\rho_i}\quad\mbox{and}\quad x_i^\omega:=\frac 1{m_i}\ix{x\,\rho_i}\;.
\]
We shall find a solution of \equ{1} as a critical point of $\mathsf J\,[u]=\mathcal J[-u]$ in case of~\equ{PolytropicCase}, with the notations of Section~\ref{Sec:Setup}, that is when $G(-s)=\tfrac 1{p+1}\,s_+^{p+1}$ for any $s\in\R$. The functional $\mathsf J$ reads
\be{J}
\mathsf J\,[u] = \frac 12 \ix{|\nabla u|^2} - \frac 1{p+1}\sum_{i=1}^N \ix{\big( u-\la_i + \tfrac12\,\omega^2\,|x'|^2\big)^{p+1}_+\,\chi_i}\;.
\ee

Heuristically, our method goes as follows. We first consider the so-called \emph{basic cell} problem: we characterize the solution with a single component support, when $\omega=0$ and then build an \emph{ansatz} by considering approximate solutions made of the superposition of basic cell solutions located close to relative equilibrium points, when they are far apart from each other. This can be done using the scaling invariance, in the low angular velocity limit $\omega\to 0_+$. The proof of our main results will be given in Sections~\ref {Sec:Linear}--\ref{Sec:Proofs}. It relies on a dimensional reduction of the variational problem: we shall prove that for a well chosen $u$, $\mathsf J\,[u]=\sum_{i=1}^N \la_i^{5-p}\,\mathsf e_*\,-\,\mathcal V_m^\omega (\xi_1,\ldots\xi_N)\,+\,o(1)$ for some constant $\mathsf e_*$, up to $o(1)$ terms, which are uniform in $\omega>0$, small. Hence finding a critical point for $\mathsf J$ will be reduced to look for a critical point of $\mathcal V_m^\omega$ as a function of $(\xi_1,\ldots\xi_N)$.

\subsection{The basic cell problem}

Let us consider the following problem
\be{Pb1}
-\Delta w = (w-1)_+^p \quad \hbox{in}\;\R^3\,.
\ee
\begin{lemma}\label{Lem:Cell} Under the condition $\lim_{|x|\to\infty}w(x)=0$, Equation~\equ{Pb1} has a unique solution, up to translations, which is positive and radially symmetric. \end{lemma}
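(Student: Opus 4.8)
The plan is to study the semilinear elliptic equation $-\Delta w = (w-1)_+^p$ in $\R^3$ under the decay condition $\lim_{|x|\to\infty}w(x)=0$, and to establish existence, positivity, radial symmetry, and uniqueness up to translations. First I would observe that any solution is harmonic outside the set $\{w>1\}$, and since $w\to 0$ at infinity, the maximum principle forces $w>0$ everywhere (the Newtonian potential of a nonnegative compactly supported source is positive) and shows that the support of the right-hand side, i.e. $\{w\ge 1\}$, is compact: indeed $w$ coincides with the Newtonian potential $\frac{1}{4\pi|\cdot|}*(w-1)_+^p$, which is bounded and decays, so $\{w\ge 1\}$ must be bounded. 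This recasts the problem as a free-boundary problem on a compact region with harmonic exterior.

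For existence and radial symmetry I would use the moving plane method of Gidas--Ni--Nirenberg. Writing $w$ as the Riesz potential of its nonnegative source, $w(x)=\frac{1}{4\pi}\ix{\frac{(w-1)_+^p}{|x-y|}}$, one checks the hypotheses needed to run moving planes on solutions decaying at infinity; the nonlinearity $s\mapsto (s-1)_+^p$ is nondecreasing, which is exactly the monotonicity required for the comparison step. The conclusion is that, after fixing the center by a translation, $w$ is radially symmetric and strictly radially decreasing in $r=|x|$. Existence of at least one solution can be obtained variationally: the basic cell is the (unique, up to translation) minimizer of the associated free energy at fixed mass, equivalently a critical point of $\mathsf J$ with a single component at $\omega=0$ and $\la=1$; alternatively one solves the radial ODE $-w''-\tfrac2r\,w'=(w-1)_+^p$ directly as an initial value problem with $w'(0)=0$ and shoots on $w(0)>1$ to meet the decay condition, which is standard for this subcritical range.

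The main obstacle is uniqueness. Once radial symmetry is known, the problem reduces to the ODE $-(r^2 w')' = r^2 (w-1)_+^p$ with $w'(0)=0$ and $w(r)\to 0$ as $r\to\infty$. Let $R$ denote the free boundary radius where $w(R)=1$, so that for $r\ge R$ one has $w(r)=c/r$ for some $c>0$ determined by matching, while on $0\le r\le R$ the function solves the nonlinear ODE with $w>1$. The difficulty is that both $w(0)$ and $R$ are unknowns, and one must show the shooting map is injective. I would exploit a scaling/rescaling argument: the equation is invariant under the transformation that normalizes the value at the boundary, reducing the free parameters, and then use a Pohozaev-type identity or a monotonicity (uniqueness) result for ground states of $-\Delta w = f(w)$ with $f$ of this precise form. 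The subcritical condition $p<5$ (i.e. $p+1<2^*=6$) should be what guarantees the relevant compactness and the non-degeneracy that underlies uniqueness.

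An alternative, cleaner route to uniqueness is to invoke the known uniqueness theory for positive radial solutions of $-\Delta w = g(w)$ in $\R^n$ (in the spirit of results on uniqueness of ground states), verifying that $g(s)=(s-1)_+^p$ satisfies the structural hypotheses there; the compact-support structure of the source and the explicit Newtonian tail make the matching at the free boundary $r=R$ determine the solution completely. I would expect the interior analysis — controlling the behavior near the free boundary $r=R$ where $(w-1)_+^p$ vanishes to order $p$ — to require the most care, but no single step beyond the uniqueness argument should present a conceptual difficulty.
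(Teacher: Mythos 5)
Your first two steps (positivity, compactness of the set $\{w\ge 1\}$, radial symmetry and strict radial monotonicity via moving planes) are correct and coincide with the paper's, which cites \cite{MR544879} and \cite{flucherwei} for exactly this. The genuine gap is in the step you yourself call the main obstacle: uniqueness is never actually proved. Both routes you propose are deferrals rather than arguments --- ``show the shooting map is injective'' via an unspecified Pohozaev-type identity, or ``invoke the known uniqueness theory for ground states \dots verifying that $g(s)=(s-1)_+^p$ satisfies the structural hypotheses there.'' The classical ground-state uniqueness theorems (Kwong, Peletier--Serrin, Serrin--Tang) have structural hypotheses tailored to rather different nonlinearities, and checking them for the plateau nonlinearity $(s-1)_+^p$ is neither routine nor done here. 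Moreover, the scaling claim you lean on is wrong as stated: the equation $-\Delta w=(w-1)_+^p$ is \emph{not} scale invariant (the constant $1$ breaks the invariance); only the problem solved by $w-1$ on the set $\{w>1\}$ is.

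That last observation is precisely the paper's key idea, and it turns uniqueness (and existence) into elementary algebra instead of a hard ODE problem. Since $w$ is radial and decreasing after centering, $\{w>1\}$ is a ball $B_R(0)$, and $v:=w-1$ is a positive solution of the Lane--Emden Dirichlet problem $-\Delta v=v^p$ in $B_R$, $v=0$ on $\partial B_R$; by the classical uniqueness for this subcritical problem, $v=Z_R:=R^{-2/(p-1)}Z(\cdot/R)$, where $Z$ is the unique positive solution on the unit ball. Outside the ball, $w(x)=m_*/(4\pi|x|)$. The $C^1$ matching at $|x|=R$ then gives two equations for the two unknowns $(R,m_*)$: continuity of $w$ gives $m_*=4\pi R$, and continuity of $w'$ gives $R^{2/(p-1)}=-Z'(1)>0$, so both are uniquely determined; conversely, the resulting piecewise function is a solution, which settles existence at the same time. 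Your shooting formulation can in fact be closed by the same scaling computation --- the interior solution with $w(0)=a$ is $1+(a-1)\,V\big((a-1)^{(p-1)/2}\,r\big)$ with $V$ the normalized Lane--Emden profile, and the decay condition reduces to a linear equation in $a-1$ --- but as written your proposal stops exactly where the proof has to start.
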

\begin{proof} Since $p$ is subcritical, it is well known that the problem
\[
-\Delta Z = Z^p \quad \hbox{in}\;B_1(0)
\]
with homogeneous Dirichlet boundary conditions, $Z=0$, on $\partial B_1(0)$, has a unique positive solution, which is also radially symmetric (see \cite{MR544879}). For any $R>0$, the function $Z_R (x) := R^{-2/(p-1)}\,Z(x/R)$ is the unique radial, positive solution of
\[
-\Delta Z_R = Z_R^p \quad \hbox{in}\;B_R(0)
\]
with homogeneous Dirichlet boundary conditions on $\partial B_R(0)$. According to \cite{flucherwei,MR544879}, any positive solution of \equ{Pb1} is radially symmetric, up to translations. Finding such a solution $w$ of \equ{Pb1} is equivalent to finding numbers $R>0$ and $m_*>0$ such that the function, defined by pieces as $w=Z_R +1$ in $B_R$ and $w(x) = m_*/(4\pi\,|x|)$ for any $x\in\R^3$ such that $|x|>R$, is of class $C^1$. These numbers are therefore uniquely determined by
\[
w(R^-)= 1= \frac{m_*}{4\pi\,R} = w(R^+)\,,\;w'(R^-)= R^{-\frac 2{p-1}-1}\,Z' (1) = -\frac{m_*}{4\pi\,R^2} = w'(R^+)\,,
\]
which uniquely determines the solution of \equ{Pb1}.
\qed\end{proof}

Now let us consider the slightly more general problem
\[
-\Delta w^\la = (w^\la-\la )_+^p \quad \hbox{in}\;\R^3
\]
with $\lim_{|x|\to\infty}w^\la(x)=0$. For any $\lambda>0$, it is straightforward to check that it has a unique radial solution given by
\[
w^\la (x) = \la\,w\big( \la^{(p-1)/2}\,x\big)\quad\forall\;x\in\R^3\,.
\]
Let us observe, for later reference, that
\be{ppp}
\ix{(w^\la-\la )_+^p} = \la^{(3-p)/2} \ix{(w -1 )_+^p}=:\la^{(3-p)/2}\,m_*\,.
\ee
Moreover, $w^\la$ is given by
\[
w^\la (x) = \frac{m_*}{4\pi\,|x|}\,\la^{(3-p)/2}\quad\forall\;x\in\R^3\;\mbox{such that}\;|x|>R\,\lambda^{-(p-1)/2}\,.
\]

\subsection{The ansatz}

We consider now a first approximation of a solution of \equ{1}-\equ{2}, built as a superposition of the radially symmetric functions $w^{\la_i}$ translated to points $\xi_i$, $i=1,\ldots N$ in~$\R^2\times\{0\}$, far away from each other:
\[
w_i(x) := w^{\la_i} (x-\xi_i)\;, \quad W_\xi: = \sum_{i=1}^N w_i\;.
\]
Recall that we are given the masses $m_1,\ldots m_N$. We choose, according to formula~\equ{ppp}, the positive numbers $\la_i$
so that
\[
\ix{(w_i -\la_i)^p_+}\;=\;m_i\quad \foral i=1,\ldots N\;.
\]
By $\xi$ we denote the array $(\xi_1,\,\xi_2,\ldots\xi_N)$.

We shall assume in what follows that the points~$\xi_i$ are such that for a large, fixed $\mu>0$, and all small $\omega>0$ we have
\be{constraints}
|\xi_i|<\mu\,\omega^{-2/3}\,,\quad |\xi_i-\xi_j|>\mu^{-1}\,\omega^{-2/3}\;.
\ee
Equivalently,
\be{constraints1}
|\zeta_i|<\mu\,\,,\quad |\zeta_i-\zeta_j|>\mu^{-1}\quad\hbox{ where }\quad \zeta_i := \omega^{-2/3}\,\xi_i\;.
\ee

We look for a solution of \equ{1} of the form
\[
u = W_\xi+ \phi
\]
for a convenient choice of the points $\xi_i$, where $\phi$ is globally uniformly small when compared with $W_\xi$. For this purpose, we consider a fixed number $R>1$ such that
\[
\mbox{\rm supp}\,(w^{\la_i}-\la_i )_+\subset B_{R-1}(0)\quad\forall\;i=1,\,2\ldots N
\]
and define the functions
\[
\chi (x) = \left\{ \begin{matrix} 1 & \hbox{ if } |x| < R \\ &\\ 0 & \hbox{ if } |x| \ge R \end{matrix} \right. \quad\mbox{and}\quad \chi_i(x) = \chi (x-\xi_i )\;.
\]
Thus we want to find a solution to the problem
\[
\Delta ( W_\xi+ \phi) + \sum_{i=1}^N \(W_\xi- \la_i + \phi + \tfrac12\,\omega^2\,|x'|^2 \)^p_+\,\chi_i = 0\quad\hbox{in}\;\R^3
\]
with $\lim_{|x|\to\infty}\phi(x)=0$, that is we want to solve the problem
\be{ddd}
\Delta \phi + \sum_{i=1}^N p\(W_\xi-\la_i + \tfrac12\,\omega^2\,|x'|^2\)^{p-1}_+ \chi_i\,\phi = -\mathsf E - \mathsf N[\phi]
\ee
where
\begin{eqnarray*}
&&\mathsf E := \Delta W_\xi + \sum_{i=1}^N \(W_\xi- \la_i + \tfrac12\,\omega^2\,|x'|^2 \)^p_+\,\chi_i\;,\\
&&\mathsf N[\phi] := \sum_{i=1}^N \Big[ \(W_\xi- \la_i + \tfrac12\,\omega^2\,|x'|^2 + \phi \)^p_+ - \(W_\xi- \la_i + \tfrac12\,\omega^2\,|x'|^2 \)^p_+ \\
&&\hspace*{6cm}- p\(W_\xi-\la_i + \tfrac12\,\omega^2\,|x'|^2\)^{p-1}_+\phi\,\Big]\,\chi_i\;.
\end{eqnarray*}

\section{A linear theory}\label{Sec:Linear}

The purpose of this section is to develop a solvability theory for the operator
\[
\mathsf L[\phi] = \Delta \phi + \sum_{i=1}^N p\(W_\xi-\la_i + \tfrac12\,\omega^2\,|x'|^2\)^{p-1}_+ \chi_i\,\phi\;.
\]
To this end we introduce the norms
\[
\| \phi\|_* = \sup_{x\in \R^3}\!\left (\sum_{i=1}^N |x-\xi_i | + 1\!\right )\!|\phi(x) |\;,\quad \| h \|_{**} = \sup_{x\in \R^3}\!\left (\sum_{i=1}^N |x-\xi_i |^4 + 1\!\right )\!|h(x)|\;.
\]
We want to solve problems of the form $\mathsf L[\phi] = h$ with $h$ and $\phi$ having the above norms finite. Rather than solving this problem directly, we consider a {\em projected problem} of the form
\be{l1}
\mathsf L[\phi] = h + \sum_{i=1}^N\sum_{j=1}^3 c_{ij}\,Z_{ij}\,\chi_i\;,
\ee
\be{l2}
\lim_{|x|\to\infty}\phi(x)=0\;,
\ee
where $Z_{ij}:= \partial_{x_j}w_i$, subject to orthogonality conditions
\be{l3}
\ix{\phi\,Z_{ij}\,\chi_i} = 0 \quad\forall\;i=1,\,2\ldots N,\;j=1,\,2,\,3\;.
\ee
Equation \equ{l1} involves the coefficients $c_{ij}$ as Lagrange multipliers associated to the constraints \equ{l3}. If we can solve the equations $\mathsf L[\psi]=h$ and $\mathsf L[Y_{ij}]=Z_{ij}$, and if we define
$c_{ij}$ such that $\int_{\R^3}\psi\,Z_{ij}\,dx+c_{ij}\int_{\R^3}Y_{ij}\,Z_{ij}\,dx=0$, then we observe that $\phi=\psi+\sum_{i,\,j}c_{ij}\,Y_{ij}$ solves \equ{l1} and satisfies \equ{l3}. However, for existence, we will rather reformulate the question as a constrained variational problem; see Equation~\equ{Eq:LinearVar} below.
\begin{lemma}\label{linear} Assume that \equ{constraints} holds. Given $h$ with $\|h\|_{**} < +\infty $, Problem \equ{l1}-\equ{l3} has a unique solution $\phi=:\mathsf T[h]$ and there exists a positive constant $C$, which is independent of $\xi$ such that, for $\omega>0$ small enough,
\be{cota}
\|\phi\|_* \le C\,\|h\|_{**}\;.
\ee\end{lemma}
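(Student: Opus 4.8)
The plan is to establish the a priori estimate \equ{cota} first, and then use it to deduce both existence and uniqueness for the projected problem \equ{l1}--\equ{l3}. The heart of the matter is the a priori bound: I would argue by contradiction. Suppose \equ{cota} fails. Then there exist sequences $\omega_n\to 0_+$, configurations $\xi^{(n)}$ satisfying \equ{constraints}, right-hand sides $h_n$, Lagrange multipliers $c_{ij}^{(n)}$, and solutions $\phi_n$ of \equ{l1}--\equ{l3} with $\|\phi_n\|_*=1$ but $\|h_n\|_{**}\to 0$. The goal is to derive a contradiction by showing $\phi_n\to 0$ in $\|\cdot\|_*$. The weights in the two norms are chosen precisely so that the Newtonian potential maps $\|\cdot\|_{**}$ boundedly into $\|\cdot\|_*$: a source decaying like $|x-\xi_i|^{-4}$ produces, via convolution with $1/(4\pi|\cdot|)$, a potential decaying like $|x-\xi_i|^{-1}$, which is the decay encoded in $\|\cdot\|_*$. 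This is the structural reason the estimate can hold with a constant independent of $\xi$.

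The key steps in the contradiction argument are as follows. First I would control the Lagrange multipliers $c_{ij}^{(n)}$: testing \equ{l1} against $Y_{ij}$ (or directly against suitable test functions related to $Z_{ij}$), and using the orthogonality \equ{l3} together with the non-degeneracy of the kernel elements $Z_{ij}=\partial_{x_j}w_i$, one shows that $|c_{ij}^{(n)}|\to 0$ under the assumption $\|h_n\|_{**}\to 0$ and $\|\phi_n\|_*$ bounded. The crucial input here is that the translations generate the kernel of the linearized operator around each basic cell $w^{\la_i}$, and that the quadratic form $\int Y_{ij}Z_{ij}$ is bounded away from zero. Second, with the right-hand side of \equ{l1} now going to zero in $\|\cdot\|_{**}$, I would use the representation of $\phi_n$ through the Newtonian potential to get a pointwise bound. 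Since $\|\phi_n\|_*=1$, there is a point $x_n$ where the weighted quantity is of order one. Two regimes must be examined: either $x_n$ stays at bounded distance from one of the centers $\xi_i^{(n)}$ (the \emph{inner} regime), or $\sum_i|x_n-\xi_i^{(n)}|\to\infty$ (the \emph{outer} regime).

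In the inner regime, after translating by $\xi_i^{(n)}$ the functions $\phi_n$ converge (by elliptic estimates and the uniform decay encoded in the norm) to a bounded solution $\phi_\infty$ of the limiting linearized equation $\Delta\phi_\infty + p\,(w^{\la_i}-\la_i)_+^{p-1}\phi_\infty = 0$ on $\R^3$ with $\phi_\infty\to 0$ at infinity; the interaction terms from the other cells and the centrifugal term $\tfrac12\omega_n^2|x'|^2$ vanish in the limit because of \equ{constraints} and $\omega_n\to 0_+$. By the non-degeneracy of the basic cell (the kernel of this limiting operator is spanned exactly by the translations $\partial_{x_j}w^{\la_i}$), $\phi_\infty$ must be a linear combination of the $Z_{ij}$; but the orthogonality conditions \equ{l3} pass to the limit and force $\phi_\infty=0$, contradicting that the weighted value at $x_n$ is of order one. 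In the outer regime, away from all supports the equation reduces to $\Delta\phi_n=-$(small), so $\phi_n$ is essentially harmonic with the right-hand side controlled in $\|\cdot\|_{**}$; the maximum principle together with the explicit decay of the Newtonian potential of a $\|\cdot\|_{**}$-bounded source shows $(\sum_i|x-\xi_i|+1)|\phi_n|\to 0$ uniformly, again a contradiction. The main obstacle, and the step requiring the most care, is making the non-degeneracy argument in the inner regime fully rigorous with a constant \emph{uniform in $\xi$}: one must ensure that the convergence to the limiting problem is strong enough in the weighted norm, that the cross-interactions between well-separated cells are genuinely negligible under \equ{constraints}, and that the centrifugal perturbation $\tfrac12\omega^2|x'|^2$, although unbounded in $x$, does not spoil the estimate on the support $K_i$ where $|x'|$ is comparable to $|\xi_i|\lesssim\omega^{-2/3}$, so that $\omega^2|x'|^2\lesssim\omega^{2/3}\to 0$.

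Once \equ{cota} is established, existence and uniqueness follow by the standard variational-plus-Fredholm scheme already indicated in the text before the lemma. I would set up the constrained variational problem alluded to in Equation~\equ{Eq:LinearVar}: minimize the quadratic energy associated with $\mathsf L$ over the closed subspace cut out by the orthogonality constraints \equ{l3}, in a Hilbert space built from $\nabla\phi\in L^2$. The a priori estimate guarantees coercivity on this subspace uniformly in $\xi$, so a unique minimizer exists; its Euler--Lagrange equation is exactly \equ{l1} with the constraints appearing through the multipliers $c_{ij}$, and \equ{cota} gives the claimed bound on $\mathsf T[h]$. Uniqueness is immediate from \equ{cota} applied to the difference of two solutions with $h=0$. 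The linearity of the solution map $h\mapsto\mathsf T[h]=\phi$ is then clear.
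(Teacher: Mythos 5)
Your a priori estimate is, in outline, exactly the paper's own proof: the same contradiction setup with $\omega_n\to 0$, $\|\phi_n\|_*=1$, $\|h_n\|_{**}\to 0$; the same control of the multipliers $c^n_{ij}$ by testing the equation against the kernel elements; the same inner blow-up argument in which $\phi_n(\xi_i^n+\cdot)$ converges to a bounded solution of $\Delta\psi+p\,(w^{\la_i}-\la_i)_+^{p-1}\psi=0$, which by the non-degeneracy of the basic cell (the Flucher--Wei classification of the kernel by translations) and the limiting orthogonality must vanish; and the same outer step comparing $\phi_n$ with the Newtonian potential of a $\|\cdot\|_{**}$-controlled source via the maximum principle. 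One small caveat: testing against $Y_{ij}$ with $\mathsf L[Y_{ij}]=Z_{ij}$ is circular at this stage, since the solvability of $\mathsf L[\cdot]=Z_{ij}$ is part of what is being proved; your alternative of testing directly against $Z_{ij}$ is what the paper does, and it suffices.

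The genuine gap is in your existence step. You propose to \emph{minimize} the quadratic energy associated with $\mathsf L$ on the constrained subspace $\mathcal H$ and claim that \equ{cota} gives coercivity. This fails, because the quadratic form $Q[\phi]=\ix{|\nabla\phi|^2}-\ix{\sum_{i} p\(W_\xi-\la_i+\tfrac12\,\omega^2\,|x'|^2\)_+^{p-1}\chi_i\,\phi^2}$ is \emph{indefinite} on $\mathcal H$: take $\phi_0=(w_i-\la_i)_+$, which is radial about $\xi_i$, hence orthogonal to $Z_{ij}\,\chi_i$ by odd symmetry and to $Z_{kj}\,\chi_k$, $k\ne i$, by disjointness of supports, so $\phi_0\in\mathcal H$; integrating by parts with $-\Delta(w_i-\la_i)=(w_i-\la_i)_+^p$ gives $Q[\phi_0]=(1-p)\ix{(w_i-\la_i)_+^{p+1}}+o(1)<0$ since $p>1$. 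The orthogonality conditions \equ{l3} remove only the translation kernel, not the negative modes of the linearized operator, so the energy is unbounded below on $\mathcal H$ and no minimizer exists; moreover \equ{cota} is an injectivity-type estimate and cannot yield positivity of $Q$. The correct use of \equ{cota} is the one in the paper: write the weak formulation \equ{Eq:LinearVar} in $\mathcal H$ as $\phi+\mathsf K[\phi]=\tilde h$ with $\mathsf K$ compact (the potential is compactly supported and $h$ decays), observe that the homogeneous equation has only the trivial solution by \equ{cota}, and conclude existence by Fredholm's alternative. You do name this scheme in passing, but the argument you actually describe (coercivity and a unique minimizer) is false as stated; with the Fredholm step restored, the proof is complete.
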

\begin{proof} In order to solve \equ{l1}-\equ{l3}, we first establish \equ{cota} as an \emph{a priori\/} estimate. Assume by contradiction the existence of sequences $\omega_n\to 0$, $\xi_i^n$ satisfying \equ{constraints} for $\omega = \omega_n$, of functions $\phi_n$, $h_n$ and of constants $c_{ij}^n$ for which
\[
\begin{array}{c}
\displaystyle\|\phi_n\|_* = 1\;,\quad\lim_{n\to\infty}\|h_n\|_{**}=0\;,\\
\displaystyle\ix{\phi_n\,Z_{ij}\,\chi_i} = 0\quad\forall\;i\,,\;j\quad\mbox{and}\quad L[\phi_n] = h_n + \sum_{i=1}^N\sum_{j=1}^3 c^n_{ij}\,Z_{ij}\,\chi_i\;.
\end{array}
\]
Testing the equation against $Z_{kl}$, we obtain, after an integration by parts,
\[
O(\omega_n^{2/3})\,\| \phi_n\|_{*} = \ix{h_n\,Z_{kl}} + c^n_{kl} \ix{|Z_{kl}|^2\,\chi_k} + O(\omega_n^{4/3}) \sum_{(i,j)\ne (k,\,l)} |c_{ij}^n|\;.
\]
{}From these relations we then get $c^n_{kl} =O(\omega_n^{2/3})+O(\|h_n\|_{**})\to 0$ for all $k$, $l$.

Let us prove that $\lim_{n\to\infty}\|\phi_n\|_{L^\infty(\R^3)}=0$. If not, since $\|\phi_n\|_* =1$, we may assume that there is an index $i$ and a sufficiently large number $R>0$ for which
\[
\liminf_{n\to\infty}\|\phi_n\|_{L^\infty (B_R(\xi_i))} > 0\;.
\]
Using elliptic estimates, and defining $\psi_n (x) = \phi_n(\xi_i^n +x)$, we may assume that~$\psi_n$ uniformly converges in $C^1$ sense over compact subsets of $\R^3$ to a bounded, non-trivial solution $\psi$ of the equation
\[
\begin{array}{c}
\displaystyle
\Delta \psi + p\,\big(w^{\la_i} -\la_i\big)_+^{p-1} \psi = 0\;,\\
\displaystyle\ix{\psi\,\partial_{x_j} w^{\la_i}\,\chi} = 0 \quad \forall\;j=1,\,2,\,3\;.
\end{array}
\]
According to \cite[Lemma 5]{flucherwei}, $\psi$ must be a linear combination of the functions $\partial_{x_j} w^{\la_i}$, $j=1,2,3$. The latter orthogonality conditions yield $\psi \equiv 0$. This is a contradiction and the claim is proven. Finally, let
\[
\tilde h_n := h_n + \sum_{i=1}^N\sum_{j=1}^3 c^n_{ij}\,Z_{ij}\,\chi_i\;.
\]
Then we have that
\[
|\tilde h_n(x)| \le \Big(O(\omega_n^{2/3})+O(\|h_n\|_{**})\Big) \sum_{i=1}^k \frac 1 {1+ |x-\xi_i^n|^4}
\]
and hence $\tilde \phi_n$, the unique solution in $\R^3$ of
\[
\Delta \tilde\phi_n = \tilde h_n\;, \quad \lim_{|x|\to \infty } \tilde\phi_n(x) = 0\;,
\]
satisfies
\[
|\tilde \phi_n(x)| \le \Big(O(\omega_n^{2/3})+O(\|h_n\|_{**})\Big) \sum_{i=1}^k\frac 1 { |x-\xi_i^n|}\;.
\]
Now, since $\phi_n-\tilde \phi_n$ is harmonic in $\R^3\setminus \cup_i B_R(\xi_i^n)$, it tends to zero as $|x|\to \infty$ and gets uniformly small on the boundary of this set. By the maximum principle, we get the estimate
\[
|\phi_n(x)| \le \Big(O(\omega_n^{2/3})+O(\|h_n\|_{**})\Big) \sum_{i=1}^k \frac 1 { |x-\xi_i^n|}\quad\hbox{on }\R^3\setminus \cup_i B_R(\xi_i^n)\;.
\]
This shows that $\lim_{n\to\infty}\|\phi_n\|_* = 0 $, a contradiction with $\|\phi_n\|_* =1$, and \equ{cota} follows.

\medskip Now, for existence issues, we observe that problem \equ{l1}-\equ{l3} can be set up in variational form in the Hilbert space
\[
\mathcal H = \Big\{ \phi \in {\mathcal D}^{1,2}(\R^3)\;:\;\ix{\phi\,Z_{ij}\,\chi_i} = 0\quad\forall\;i=1,\,2\ldots N,\;j=1,\,2,\,3 \Big\}
\]
endowed with the inner product $\langle\phi,\psi\rangle = \ix{\nabla\psi\cdot\nabla\phi}$, as
\be{Eq:LinearVar}
\ix{\nabla \phi\cdot\nabla \psi} - \ix{\sum_{i=1}^N p\(W_\xi-\la_i + \tfrac12\,\omega^2\,|x'|^2\)^{p-1}_+\,\chi_i\,\phi\,\psi} + \ix{\psi\,h} =0
\ee
for all $\psi \in \mathcal H$. Since the potential defined by the second term of the above equality is compactly supported and $h$ decays sufficiently fast, this equation takes the form $\phi + \mathsf K[\phi] =\tilde h$ where $\mathsf K$ is a compact linear operator of $\mathcal H$. The equation for $\tilde h = 0$ has just the trivial solution in view of estimate \equ{cota}. Fredholm's alternative thus applies to yield existence. This concludes the proof of Lemma~\ref{linear}.\qed\end{proof}

We conclude this section with some considerations on the differentiability of the solution with respect to the parameter $\xi$. Let us assume that $h= h(\cdot,\xi)$ defines a continuous operator into the space of functions with finite $\|\cdot \|_{**}$-norm. We also assume that $\| \partial_\xi h(\cdot,\xi)\|_{**}<+\infty$. Let $\phi = \phi(\cdot, \xi)$ be the unique solution of Problem \equ{l1}-\equ{l3} for that right hand side, with corresponding constants $c_{ij}(\xi)$. Then $\phi$ is differentiable in $\xi$. Moreover $\partial_\xi \phi$ can be decomposed as
\[
\partial_\xi \phi = \psi + \sum_{ij}\,d_{ij}\,Z_{ij}\,\chi_j
\]
where $\psi$ solves
\[
\begin{array}{c}
\displaystyle\mathsf L[\psi] = \partial_\xi h\;- \sum_{i=1}^N p\,\partial_\xi\Big[\(W_\xi-\la_i+\tfrac12\,\omega^2\,|x'|^2\)^{p-1}_+ \chi_i\Big]\,\phi\hspace*{3.5cm}\\
\hspace*{3.5cm}\displaystyle +\sum_{i=1}^N\sum_{j=1}^3\Big[c_{ij}\,\partial_\xi\(Z_{ij}\,\chi_i\)+b_{ij}\,Z_{ij}\,\chi_i\Big]\,,\\
\displaystyle\lim_{|x|\to\infty}\psi(x)=0\;,\quad\ix{\psi\,Z_{ij}\,\chi_i} = 0\quad\forall\;i=1,\,2\ldots N,\;j=1,\,2,\,3\;,
\end{array}
\]
and the constants $d_{ij}$ are chosen so that $\eta:=\sum_{i=1}^N\sum_{j=1}^3d_{ij}\,Z_{ij}$ satisfies
\[
\ix{\chi_{ij}\,Z_{ij}\,\eta} = - \ix{\partial_{\xi}\(\chi_{ij}\,Z_{ij}\) \phi}\quad\forall\;i=1,\,2\ldots N,\;j=1,\,2,\,3\;.
\]
\begin{lemma}\label{derivative} With the same notations and conditions as in Lemma~\ref{linear}, we have
\[
\|\partial_\xi \phi(\cdot, \xi)\|_*\,\le\,C\,\big(\,\|h (\cdot, \xi) \|_{**} + \|\partial_\xi h(\cdot, \xi)\|_{**} \big)\,.
\]Ê\end{lemma}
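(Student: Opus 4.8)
My plan is to differentiate the relations defining $\phi=\mathsf T[h]$ with respect to the parameter $\xi$ and to recognize that the derivative solves a projected linear problem of exactly the type covered by Lemma~\ref{linear}, so that the \emph{a priori} bound \equ{cota} can be reused. First I would justify differentiability itself. Since the bilinear form in \equ{Eq:LinearVar} and the datum $h(\cdot,\xi)$ depend on $\xi$ in a $C^1$ manner, and since Lemma~\ref{linear} furnishes a bound on the inverse of $\mathsf L$ on the constrained space $\mathcal H$ that is uniform in $\xi$ and in small $\omega$, the implicit function theorem applied to the map $(\phi,\xi)\mapsto\mathsf L[\phi]-h-\sum_{i,j}c_{ij}Z_{ij}\chi_i$ together with the constraints \equ{l3} shows that $\phi(\cdot,\xi)$ and the multipliers $c_{ij}(\xi)$ are of class $C^1$; alternatively one may argue directly with difference quotients and pass to the limit using \equ{cota}.

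Differentiating \equ{l1} and \equ{l3} in $\xi$ produces precisely the equation for $\partial_\xi\phi$ displayed before the statement: the potential coefficient contributes $-\sum_i p\,\partial_\xi[(W_\xi-\la_i+\tfrac12\,\omega^2\,|x'|^2)^{p-1}_+\chi_i]\,\phi$, the datum contributes $\partial_\xi h$, and differentiating the projection yields $\sum_{i,j}[c_{ij}\,\partial_\xi(Z_{ij}\chi_i)+b_{ij}\,Z_{ij}\chi_i]$ with $b_{ij}:=\partial_\xi c_{ij}$. Because the orthogonality conditions \equ{l3} themselves depend on $\xi$, $\partial_\xi\phi$ fails to be orthogonal to the $Z_{ij}\chi_i$, and this defect is repaired by subtracting $\eta=\sum_{i,j}d_{ij}Z_{ij}$, the coefficients $d_{ij}$ being fixed by $\ix{\chi_iZ_{ij}\,\eta}=-\ix{\partial_\xi(\chi_iZ_{ij})\,\phi}$, so that $\psi:=\partial_\xi\phi-\eta$ satisfies the homogeneous constraints and solves a problem of the form \equ{l1}-\equ{l3}. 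Applying \equ{cota} to $\psi$ then reduces the task to bounding the $\|\cdot\|_{**}$-norm of its right-hand side and the $\|\cdot\|_*$-norm of $\eta$.

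For the right-hand side I would proceed term by term. The contribution $\partial_\xi h$ is controlled by $\|\partial_\xi h\|_{**}$ by definition. In $-\sum_i p\,\partial_\xi[\cdots]\,\phi$, differentiating the compactly supported bounded potential (note that differentiating $W_\xi$ in $\xi_i$ reproduces the $Z_{ij}$ up to sign, while $\partial_\xi\chi_i$ is supported in a fixed annulus) leaves a bounded compactly supported factor, so this term has $\|\cdot\|_{**}$-norm at most $C\,\|\phi\|_*\le C\,\|h\|_{**}$ by \equ{cota}. The multipliers are handled by testing: testing \equ{l1} against $Z_{kl}$ and inverting the almost diagonal matrix $(\ix{Z_{ij}Z_{kl}\chi_k})$ — diagonal entries of order one, off-diagonal entries small thanks to the separation \equ{constraints} — gives $|c_{ij}|\le C\,\|h\|_{**}$, exactly as in the proof of Lemma~\ref{linear}; testing the differentiated equation against $Z_{kl}$ and solving the analogous near diagonal system gives $|b_{ij}|\le C\,(\|h\|_{**}+\|\partial_\xi h\|_{**})$. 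This controls the $\|\cdot\|_{**}$-norm of the right-hand side of the $\psi$-equation.

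It remains to estimate $\eta$, and this is the step demanding care with the $\omega$-weights. The separation \equ{constraints} forces the weight $\sum_i|x-\xi_i|+1$ to be of order $\omega^{-2/3}$ on each fixed ball $B_R(\xi_i)$; consequently $\|Z_{ij}\|_*=O(\omega^{-2/3})$, but at the same time $|\phi|\le C\,\omega^{2/3}\,\|\phi\|_*$ on the support of $\partial_\xi(\chi_iZ_{ij})$. The defining system for the $d_{ij}$ is again almost diagonal with diagonal entries $\ix{|Z_{ij}|^2\chi_i}$ of order one, whence $|d_{ij}|\le C\,\big|\ix{\partial_\xi(\chi_iZ_{ij})\,\phi}\big|\le C\,\omega^{2/3}\,\|\phi\|_*\le C\,\omega^{2/3}\,\|h\|_{**}$. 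The gain $\omega^{2/3}$ in the coefficients exactly compensates the loss $\omega^{-2/3}$ in $\|Z_{ij}\|_*$, so $\|\eta\|_*\le\sum_{i,j}|d_{ij}|\,\|Z_{ij}\|_*\le C\,\|h\|_{**}$. \textbf{The main obstacle} is precisely this bookkeeping of the $\omega^{-2/3}$ weights, together with checking that all the auxiliary linear systems for $c_{ij}$, $b_{ij}$ and $d_{ij}$ remain uniformly invertible as $\omega\to0_+$, where the geometric separation \equ{constraints} is essential. Once these points are secured, combining the last two paragraphs with \equ{cota} yields $\|\partial_\xi\phi\|_*\le\|\psi\|_*+\|\eta\|_*\le C\,(\|h\|_{**}+\|\partial_\xi h\|_{**})$.
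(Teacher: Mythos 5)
Your proposal is correct and follows exactly the route the paper intends: the paper states this lemma without a written proof, immediately after setting up precisely your decomposition $\partial_\xi\phi=\psi+\eta$, the differentiated (projected) equation for $\psi$, and the defining relations for the coefficients $d_{ij}$. Your added details — the near-diagonal linear systems for $c_{ij}$, $b_{ij}$, $d_{ij}$, the application of \eqref{cota} to $\psi$, and the $\omega^{\mp2/3}$ bookkeeping showing that $|d_{ij}|=O(\omega^{2/3}\,\|h\|_{**})$ compensates $\|Z_{ij}\|_*=O(\omega^{-2/3})$ so that $\|\eta\|_*\le C\,\|h\|_{**}$ — correctly supply the estimates the paper leaves to the reader.
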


\section{The projected nonlinear problem}\label{Sec:Projection}

Next we want to solve a {\em projected version} of the nonlinear problem \equ{ddd} using Lemma~\ref{linear}. Thus we consider the problem of finding $\phi$ with $\|\phi\|_* < +\infty$, solution~of
\be{nl1}
\mathsf L[\phi] = -\mathsf E -\mathsf N[\phi] + \sum_{i=1}^N\sum_{j=1}^3 c_{ij}\,Z_{ij}\,\chi_i
\ee
\be{nl3}
\lim_{|x|\to +\infty}\phi(x)=0
\ee
where the coefficients $c_{ij}$ are Lagrange multipliers associated to the orthogonality conditions
\be{nl2}
\ix{\phi\,Z_{ij}\,\chi_i} = 0\quad\forall\;i=1,\,2\ldots N,\;j=1,\,2,\,3\;.
\ee
In other words, we look for a critical point of the functional $\mathsf J$ defined by \equ{J} under the constraints \equ{nl2}.

For this purpose, we first have to measure the error $\mathsf E$. We recall that
\begin{eqnarray*}
\mathsf E &=&\Delta W_\xi+ \sum_{i=1}^N \(W_\xi- \la_i + \tfrac12\,\omega^2\,|x'|^2 \)^p_+\,\chi_i\\
&=&\sum_{i=1}^N\,\Big[\Big( w_i +\sum_{j\ne i} w_j - \la_i + \tfrac12\,\omega^2\,|x'|^2 \Big)^p_+- (w_i -\la_i)_+^p\,\Big]\,\chi_i\\
&=&\sum_{i=1}^N\,p\,\Big[w_i -\la_i +t\,\Big(\sum_{j\ne i} w_j + \tfrac12\,\omega^2\,|x'|^2 \Big)\Big]^{p-1} _+ \Big(\sum_{j\ne i} w_j + \tfrac12\,\omega^2\,|x'|^2 \Big)\,\chi_i
\end{eqnarray*}
for some function $t$ taking values in $(0,1)$. It follows that
\[
|\mathsf E| \le C \sum_{i=1}^N\,\Big[\sum_{j\ne i} \frac 1 {|\xi_i-\xi_j|} +\frac 12\,\omega^2\,|\xi_i|^2\Big]\,\chi_i\;\le C\,\omega^{ 2/3}\sum_{i=1}^N \chi_i\;,
\]
from which we deduce the estimate
\[
\|\mathsf E\|_{**} \le C\,\omega^{ 2/3}\;.
\]

As for the operator $\mathsf N[\phi]$, we easily check that for $\|\phi\|_* \le 1$,
\[
\big|\,\mathsf N[\phi]\,\big| \le C\,\sum_{i=1}^N |\phi|^\gamma\,\chi_i\quad\mbox{with}\;\gamma:=\min\{p,2\}\;,
\]
which implies
\[
\big\|\,\mathsf N[\phi] \,\big\|_{**} \le C\,\|\phi\|_*^\gamma\;.
\]
Let $\mathsf T$ be the linear operator defined in Lemma \ref{linear}. Equation \equ{nl1} can be rewritten~as
\[
\phi = {\mathsf A} [\phi] := -\mathsf T\big[\mathsf E + \mathsf N[\phi]\big]\;.
\]
Clearly the operator ${\mathsf A}$ applies the region
\[
{\mathcal B} = \left\{ \phi\in L^\infty(\R^3)\;:\;\|\phi\|_* \le K\,\omega^{2/3} \right\}
\]
into itself if the constant $K$ is fixed, large enough. It is straightforward to check that $\mathsf N[\phi]$ satisfies in this region a Lipschitz property of the form
\[
\big\|\,\mathsf N[\phi_1]-\mathsf N[\phi_2] \,\big\|_{**} \le \kappa_\omega\,\|\phi_1 - \phi_2 \|_*
\]
for some positive $\kappa_\omega$ such that $\lim_{\omega\to 0}\kappa_\omega=0$, and hence existence of a unique fixed point $\phi$ of $\mathsf A$ in $\mathcal B$ immediately follows for $\omega$ small enough. We have then solved the projected nonlinear problem.

\medskip Since the error $\mathsf E$ is even with respect to the variable $x^3$, uniqueness of the solution of \equ{nl1}-\equ{nl2} implies that this symmetry is also valid for $\phi$ itself, and besides, the numbers $c_{i3}$ are automatically all zero. Summarizing, we have proven the following result.
\begin{lemma}\label{nonlinear} Assume that $\xi=(\xi_1,\,\xi_2,\ldots\xi_N)\in \R^{2N}$ is given and satisfies~\equ{constraints}. Then Problem \equ{nl1}-\equ{nl2} has a unique solution $\phi_\xi$ which depends continuously on $\xi$ and $\omega$ for the $\|\ \|_*$-norm and satisfies $\|\phi_\xi\|_*\le C\,\omega^{2/3}$ for some positive $C$, which is independent of~$\omega$, small enough. Besides, the numbers $c_{i3}$ are all equal to zero for $i=1,\,2\ldots N$. \end{lemma}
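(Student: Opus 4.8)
The plan is to recast \equ{nl1}--\equ{nl2} as a fixed-point problem and solve it by the contraction mapping principle, using the linear solution operator $\mathsf T$ furnished by Lemma~\ref{linear}. Writing \equ{nl1} as $\phi=\mathsf A[\phi]:=-\,\mathsf T\big[\mathsf E+\mathsf N[\phi]\big]$, I would look for a fixed point in the closed ball
\[
\mathcal B=\big\{\phi\ :\ \|\phi\|_*\le K\,\omega^{2/3}\big\}
\]
for a constant $K$ to be fixed large. The two analytic inputs are the error estimate $\|\mathsf E\|_{**}\le C\,\omega^{2/3}$ and the nonlinear estimate $\|\mathsf N[\phi]\|_{**}\le C\,\|\phi\|_*^\gamma$ with $\gamma=\min\{p,2\}$. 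The first follows from the mean-value expansion of $\mathsf E$ together with \equ{constraints}: on $\mbox{supp}\,\chi_i$ both the cross terms $\sum_{j\ne i}w_j$ and the centrifugal term $\tfrac12\,\omega^2\,|x'|^2$ are $O(\omega^{2/3})$. The second follows from the elementary bound on the second-order Taylor remainder of $s\mapsto s_+^p$, and the point underlying everything below is that $\gamma>1$, since $p>\tfrac32$.

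Combining these with the a priori estimate \equ{cota}, for $\phi\in\mathcal B$ one gets
\[
\|\mathsf A[\phi]\|_*\le C\big(\|\mathsf E\|_{**}+\|\mathsf N[\phi]\|_{**}\big)\le C\,\omega^{2/3}+C\,K^\gamma\,\omega^{2\gamma/3}\;.
\]
Because $\gamma>1$, the second term is of strictly higher order in $\omega$, so fixing $K$ large and then $\omega$ small makes the right-hand side $\le K\,\omega^{2/3}$, i.e. $\mathsf A(\mathcal B)\subset\mathcal B$. For the contraction I would use the Lipschitz bound $\|\mathsf N[\phi_1]-\mathsf N[\phi_2]\|_{**}\le\kappa_\omega\,\|\phi_1-\phi_2\|_*$ with $\kappa_\omega\to0$, again a consequence of $\gamma>1$ and the smallness of the arguments on $\mathcal B$; together with \equ{cota} this gives $\|\mathsf A[\phi_1]-\mathsf A[\phi_2]\|_*\le C\,\kappa_\omega\,\|\phi_1-\phi_2\|_*$, a genuine contraction once $C\,\kappa_\omega<1$. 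Banach's theorem then produces a unique $\phi_\xi\in\mathcal B$ solving \equ{nl1}--\equ{nl2}, with the asserted bound $\|\phi_\xi\|_*\le C\,\omega^{2/3}$.

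Continuity of $\phi_\xi$ in $(\xi,\omega)$ for the $\|\cdot\|_*$-norm follows from the uniform-contraction principle, since $\mathsf E$, $\mathsf N$ and $\mathsf T$ all depend continuously on these parameters; the sharper differentiability in $\xi$ is exactly Lemma~\ref{derivative}. To obtain $c_{i3}=0$ I would exploit the reflection $x^3\mapsto-x^3$: as the $\xi_i$ lie in $\R^2\times\{0\}$, the ansatz $W_\xi$, the cutoffs $\chi_i$, the operator $\mathsf L$ and the datum $\mathsf E$ are all even in $x^3$, so by uniqueness $\phi_\xi$ is even in $x^3$ too. Since $Z_{i3}=\partial_{x^3}w_i$ is odd while $\mathsf E$, $\mathsf N[\phi_\xi]$ and $\mathsf L[\phi_\xi]$ are even, testing \equ{nl1} against $Z_{i3}\,\chi_i$ makes every term vanish by parity except $c_{i3}\ix{|Z_{i3}|^2\,\chi_i}$, forcing $c_{i3}=0$.

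All the genuine difficulty is concentrated in the two a priori estimates — the error bound $\|\mathsf E\|_{**}\le C\,\omega^{2/3}$ and the smallness $\kappa_\omega\to0$ of the contraction constant — both of which rest on the geometric constraints \equ{constraints} and on the superlinearity $\gamma>1$. Once these are in hand, together with Lemma~\ref{linear}, the self-mapping/contraction bookkeeping and the symmetry observation are routine; I therefore expect the careful verification of the error estimate under \equ{constraints} to be the main point requiring care.
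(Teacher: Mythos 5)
Your proposal is correct and follows essentially the same route as the paper: the same fixed-point reformulation $\phi=\mathsf A[\phi]=-\,\mathsf T\big[\mathsf E+\mathsf N[\phi]\big]$ on the ball $\|\phi\|_*\le K\,\omega^{2/3}$, the same two estimates $\|\mathsf E\|_{**}\le C\,\omega^{2/3}$ and $\|\mathsf N[\phi]\|_{**}\le C\,\|\phi\|_*^\gamma$ with $\gamma=\min\{p,2\}>1$, the same Lipschitz/contraction step with $\kappa_\omega\to0$, and the same evenness-in-$x^3$ argument (via uniqueness) to conclude $c_{i3}=0$. The only differences are expository: you spell out the role of $\gamma>1$ and the parity testing against $Z_{i3}\,\chi_i$, which the paper leaves implicit.
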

It is important to mention that $\phi_\xi$ also defines a continuously differentiable operator in its parameter.
Indeed, combining its fixed point characterization with the implicit function theorem and the result of Lemma~\ref{derivative},
we find in fact that
\[\label{derivative2}
\|\partial_\xi \phi_\xi \|_* \le C\,\omega^{2/3}\,.
\]
We leave the details to the reader.

\medskip With the complex notation of Section~\ref{Sec:Intro}, let us consider the rotation $e^{i\,\alpha }$ of an angle $\alpha$ around the $x^3$-axis and let $e^{i\,\alpha }\,\xi=(e^{i\,\alpha }\,\xi_1,\ldots e^{i\,\alpha }\,\xi_N)$. By construction, there is a rotational symmetry around the $x^3$-axis, which is reflected at the level of Problem \equ{nl1}-\equ{nl2} as follows.
\begin{lemma} Consider the solution $\phi$ found in Lemma~\ref{nonlinear}. For any $\alpha\in\R$ and any $(x',x^3)\in\C\times\R$, we have that
\[
\phi_{e^{i\,\alpha }\xi}(x',x^3)=\phi_\xi(e^{-i\,\alpha}\,x',x^3)\;.
\]
\end{lemma}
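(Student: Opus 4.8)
The proof rests on the uniqueness statement of Lemma~\ref{nonlinear} together with the rotational equivariance of all the objects entering the projected problem \equ{nl1}--\equ{nl2}. Write $\mathcal R_\alpha(x',x^3):=(e^{i\,\alpha}\,x',x^3)$ for the rotation of angle $\alpha$ about the $x^3$-axis, and set $\tilde\phi:=\phi_\xi\circ\mathcal R_{-\alpha}$, that is $\tilde\phi(x',x^3)=\phi_\xi(e^{-i\,\alpha}\,x',x^3)$. The claim amounts to proving that $\tilde\phi=\phi_{e^{i\,\alpha}\xi}$, which by the uniqueness part of Lemma~\ref{nonlinear} will follow once we check that $\tilde\phi$ solves \equ{nl1}--\equ{nl2} with $\xi$ replaced by $e^{i\,\alpha}\xi$ and satisfies the same a priori bound $\|\tilde\phi\|_*\le C\,\omega^{2/3}$.

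First I would record how the data transform under $\mathcal R_\alpha$. Since each $w^{\la_i}$ is radially symmetric and $\mathcal R_\alpha$ fixes the origin, $w_i^{e^{i\,\alpha}\xi}(x)=w^{\la_i}(x-(e^{i\,\alpha}\xi_i,0))=w^{\la_i}(\mathcal R_{-\alpha}x-(\xi_i,0))=w_i^\xi(\mathcal R_{-\alpha}x)$, whence $W_{e^{i\,\alpha}\xi}=W_\xi\circ\mathcal R_{-\alpha}$. The cut-off $\chi$ being radial, the same computation gives $\chi_i^{e^{i\,\alpha}\xi}=\chi_i^\xi\circ\mathcal R_{-\alpha}$; moreover $|x'|^2$ and the Laplacian are invariant under the isometry $\mathcal R_\alpha$. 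Consequently the linear operator $\mathsf L$, the error $\mathsf E$ and the nonlinear term $\mathsf N$ are all equivariant, i.e. $\mathsf E_{e^{i\,\alpha}\xi}=\mathsf E_\xi\circ\mathcal R_{-\alpha}$, $\mathsf L_{e^{i\,\alpha}\xi}[\psi\circ\mathcal R_{-\alpha}]=(\mathsf L_\xi[\psi])\circ\mathcal R_{-\alpha}$, and likewise for $\mathsf N$. Differentiating the relation $w_i^{e^{i\,\alpha}\xi}=w_i^\xi\circ\mathcal R_{-\alpha}$ shows that the functions $Z_{ij}^{e^{i\,\alpha}\xi}=\partial_{x_j}w_i^{e^{i\,\alpha}\xi}$ are, for fixed $i$, obtained from the $Z_{ij}^\xi\circ\mathcal R_{-\alpha}$ by the orthogonal rotation acting on the index pair $j\in\{1,2\}$, while $Z_{i3}^{e^{i\,\alpha}\xi}=Z_{i3}^\xi\circ\mathcal R_{-\alpha}$.

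With these identities in hand, composing with $\mathcal R_{-\alpha}$ the equation \equ{nl1} satisfied by $\phi_\xi$ and using that a rigid rotation preserves the integrals in \equ{nl2}, I obtain that $\tilde\phi$ solves \equ{nl1} with parameter $e^{i\,\alpha}\xi$, the new Lagrange multipliers being obtained from the old coefficients $c_{ij}$ by the linear action of the rotation on the pair $(c_{i1},c_{i2})$ (and $\tilde c_{i3}=c_{i3}=0$), and that $\tilde\phi$ satisfies the orthogonality conditions \equ{nl2} for $e^{i\,\alpha}\xi$. The only point requiring care is precisely this last one: because $\mathcal R_\alpha$ mixes $Z_{i1}$ and $Z_{i2}$, one cannot match the conditions index by index. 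One uses instead that the constraint set is the orthogonal complement, in the weighted sense of \equ{nl2}, of the whole space $\mathrm{span}\{Z_{ij}\,\chi_i : j=1,2,3\}$, which is globally invariant under $\mathcal R_\alpha$; hence orthogonality to all three directions for $\xi$ is equivalent to orthogonality to all three rotated directions for $e^{i\,\alpha}\xi$. This is the main, and essentially the only, obstacle, and it is resolved by this invariance of the full triple of constraints rather than of each individual one.

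Finally, the weight $\sum_i|x-\xi_i|+1$ defining $\|\cdot\|_*$ transforms into $\sum_i|x-e^{i\,\alpha}\xi_i|+1$ under $\mathcal R_\alpha$, because $|\mathcal R_{-\alpha}x-(\xi_i,0)|=|x-(e^{i\,\alpha}\xi_i,0)|$; hence $\|\tilde\phi\|_*=\|\phi_\xi\|_*\le C\,\omega^{2/3}$, so $\tilde\phi$ lies in the ball $\mathcal B$ in which the fixed point of $\mathsf A$ is unique. Lemma~\ref{nonlinear} then forces $\tilde\phi=\phi_{e^{i\,\alpha}\xi}$, which is the asserted identity.
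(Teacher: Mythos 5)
Your proof is correct and follows essentially the same route as the paper, which disposes of the lemma in one line by invoking uniqueness from Lemma~\ref{nonlinear} together with rotation invariance of the equation. Your write-up simply makes explicit what the paper leaves implicit, in particular the one genuinely delicate point: that the rotation mixes $Z_{i1}$ and $Z_{i2}$, so the orthogonality constraints and Lagrange multiplier terms must be matched through the invariance of the span $\mathrm{span}\{Z_{ij}\,\chi_i\,:\,j=1,2,3\}$ rather than index by index.
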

The proof is a direct consequence of uniqueness and rotation invariance of the equation satisfied by $\phi_\xi$.

\section{The variational reduction}\label{Sec:Reduction}

We consider the functional $\mathsf J$ defined in \equ{J}. Our goal is to find a critical point satisfying \equ{nl2}, of the form $u = W_\xi+ \phi_\xi$. We estimate $\mathsf J\,[W_\xi]$ by computing first
\[
\ix{|\nabla W_\xi|^2} =\ix{\Big| \sum_{i=1}^N \nabla w_i \Big |^2\!} = \sum_{i=1}^N\ix{|\nabla w_i |^2} + \sum_{i\ne j} \ix{\nabla w_i\cdot\nabla w_j}\;.
\]
The last term of the right hand side can be estimated by
\begin{eqnarray*}
\ix{\nabla w_i\cdot\nabla w_j} &=& -\ix{\Delta w_i\,w_j} = \ix{(w_i-\la_i)_+^p\,w_j}\\
&=&\ix{\big(w^{\la_i}-\la_i\big)_+^p\,w^{\la_j}(x+ \xi_i -\xi_j)}\\
&=&\ix{\big(w^{\la_i}-\la_i\big)_+^p\,\frac {m_j}{4\pi\,|x+\xi_i-\xi_j|}}\\
&=&\frac {m_i\,m_j}{4\pi\,|\xi_i-\xi_j|}+O(\omega^{4/3})
\end{eqnarray*}
where $m_i = \ix{\big(w^{\la_i} -\la_i\big)_+^p} = m_*\,\la_i^{(3-p)/2}$.
Next we find that
\begin{eqnarray*}
&&\ix{\Big( w_i + \sum_{j\ne i}w_j -\la_i + \tfrac12\,\omega^2\,|x'|^2\Big)^{p+1}_+\,\chi_i}\\
&&=\ix{( w_i -\la_i)^{p+1}_+}\\
&&\qquad+\,(p+1)\ix{( w_i -\la_i)^{p}_+\Big(\sum_{j\ne i}w_j + \tfrac12\,\omega^2\,|x'|^2\Big)}\,+\,O(\omega^{4/3})\\
&&=\ix{( w_i -\la_i)^{p+1}_+} + (p+1)\Big(\sum_{j\ne i} \frac {m_i\,m_j}{4\pi\,|\xi_i-\xi_j|} + \tfrac12\,\omega^2m_i\,|\xi_i|^2\Big)\,+\,O(\omega^{4/3})\;.
\end{eqnarray*}
Let us define
\[
\mathsf e_* :=\frac 12 \ix{|\nabla w|^2}- \frac 1{p+1} \ix{(w -1)^{p+1}_+}\;.
\]
Combining the above estimates, we obtain that
\be{ap1}
\mathsf J\,[W_\xi]\,=\sum_{i=1}^N \la_i^{5-p}\,\mathsf e_*\,-\,\mathcal V_m^\omega (\xi_1,\ldots\xi_N)\,+\,O(\omega^{4/3})\;,
\ee
where $\mathcal V_m^\omega(\xi)=\sum_{i\ne j}\frac {m_i\,m_j} {8\pi\,|\xi_i -\xi_j|} + \tfrac12\,\omega^2 \sum_{i=1}^Nm_i\,|\xi_i|^2$ has been introduced in Section~\ref{Sec:Intro}. Here the $O(\omega^{4/3})$ term is uniform as $\omega\to 0$ on the set of $\xi$ satisfying the constraints $\equ{constraints}$. This approximation is also uniform in the $C^1$ sense. Indeed, we directly check that
\be{ap2}
\nabla_\xi \mathsf J\,[W_\xi] = - \nabla_\xi \mathcal V_m^\omega (\xi) + O(\omega^{4/3})\;.
\ee
According to \equ{Eqn:ScalingOmega}, we have $\mathcal V_m^\omega (\xi)=\omega^{2/3}\,\mathcal V_m(\zeta)$ for $\zeta = \omega^{2/3}\,\xi$. We get a solution of Problem \equ{1}-\equ{2} as soon as all constants $c_{ij}$ are equal to zero in \equ{nl1}.
\begin{lemma}\label{Lem:CriticalLambda} With the above notations, $c_{ij}=0$ for all $i=1,\,2\ldots N,$ $j=1,\,2,\,3$ if and only if $\xi$ is a critical point of the functional $\xi\mapsto \Lambda(\xi):=\mathsf J\,[W_\xi+\phi_\xi]$.\end{lemma}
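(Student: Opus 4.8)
The plan is to differentiate $\Lambda(\xi)=\mathsf J\,[W_\xi+\phi_\xi]$ by the chain rule and to exploit the fact that, by construction, $u:=W_\xi+\phi_\xi$ solves the \emph{projected} equation. Indeed, adding $\mathsf E+\mathsf N[\phi_\xi]$ to both sides of \equ{nl1} and using the definitions of $\mathsf L$, $\mathsf E$ and $\mathsf N$, one checks that
\[
\Delta u+\sum_{i=1}^N\(u-\la_i+\tfrac12\,\omega^2\,|x'|^2\)^p_+\,\chi_i=\sum_{i=1}^N\sum_{j=1}^3 c_{ij}\,Z_{ij}\,\chi_i\;.
\]
Consequently, integrating by parts in \equ{J}, for every test function $v$ that decays at infinity we obtain the key identity
\[
D\mathsf J\,[u]\cdot v=\ix{\nabla u\cdot\nabla v}-\sum_{i=1}^N\ix{\(u-\la_i+\tfrac12\,\omega^2\,|x'|^2\)^p_+\,\chi_i\,v}=-\sum_{i,j}c_{ij}\ix{Z_{ij}\,\chi_i\,v}\;.
\]
In particular, if all $c_{ij}$ vanish then $D\mathsf J\,[u]=0$ on decaying test functions, hence $\nabla_\xi\Lambda=0$; this already yields the easy implication.

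For the converse, I differentiate with respect to the $l$-th component $\xi_{kl}$ of $\xi_k$ (recall $l\in\{1,2\}$ since $\xi_k\in\R^2\times\{0\}$, and that $c_{i3}=0$ by Lemma~\ref{nonlinear}). Since $\partial_{\xi_{kl}}W_\xi=-\partial_{x_l}w_k=-Z_{kl}$, the chain rule and the key identity give
\[
\partial_{\xi_{kl}}\Lambda=D\mathsf J\,[u]\cdot\partial_{\xi_{kl}}u=\sum_{i,j}c_{ij}\ix{Z_{ij}\,\chi_i\,Z_{kl}}-\sum_{i,j}c_{ij}\ix{Z_{ij}\,\chi_i\,\partial_{\xi_{kl}}\phi_\xi}\;.
\]
The last term is a lower-order correction: differentiating the orthogonality condition \equ{nl2} in $\xi_{kl}$ yields $\ix{Z_{ij}\,\chi_i\,\partial_{\xi_{kl}}\phi_\xi}=-\ix{\phi_\xi\,\partial_{\xi_{kl}}(Z_{ij}\,\chi_i)}$, and since $\partial_{\xi_{kl}}(Z_{ij}\,\chi_i)$ is bounded and compactly supported while $\|\phi_\xi\|_*\le C\,\omega^{2/3}$ by Lemma~\ref{nonlinear}, this is $O(\omega^{2/3})$.

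It remains to identify the leading matrix $A_{(k,l),(i,j)}:=\ix{Z_{ij}\,\chi_i\,Z_{kl}}+O(\omega^{2/3})$, with indices $i,k\in\{1,\dots N\}$ and $j,l\in\{1,2\}$, so that $\nabla_\xi\Lambda=A\,c$ is a linear system in the $2N$ unknowns $c_{ij}$. For the diagonal blocks $i=k$, radial symmetry of $w^{\la_i}$ and of $\chi$ gives $\ix{Z_{ij}\,\chi_i\,Z_{il}}=\delta_{jl}\,c_i$ with $c_i=\tfrac13\ix{|\nabla w^{\la_i}|^2\,\chi}>0$ independent of $\omega$. For the off-diagonal blocks $i\ne k$, on the support of $\chi_i$ the function $w_k$ is smooth with $|\nabla w_k|=O(|\xi_i-\xi_k|^{-2})=O(\omega^{4/3})$ by \equ{constraints}, so these entries are $O(\omega^{4/3})$. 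Hence $A$ is a perturbation of size $O(\omega^{2/3})$ of the positive block-diagonal matrix $\mathrm{diag}(c_i\,I_2)$, and is therefore invertible for $\omega>0$ small. Consequently $\nabla_\xi\Lambda=0$ forces $c=0$, which combined with $c_{i3}=0$ completes the proof.

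The step I expect to be delicate is this last one: controlling all the error terms uniformly in $\xi$ over the admissible set \equ{constraints}, so that $A$ stays a genuine small perturbation of an invertible matrix. This hinges on the \emph{a priori} bounds $\|\phi_\xi\|_*\le C\,\omega^{2/3}$ and $\|\partial_\xi\phi_\xi\|_*\le C\,\omega^{2/3}$ from Lemmas~\ref{nonlinear} and~\ref{derivative}, together with the decay of $w^{\la_i}$, which jointly guarantee that both the $\phi_\xi$-correction and the inter-component interaction terms are of strictly lower order than the diagonal entries $c_i$.
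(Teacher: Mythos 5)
Your proof is correct and takes essentially the same route as the paper: differentiate $\Lambda$ by the chain rule, use the projected equation \equ{nl1} to express $D\mathsf J\,[W_\xi+\phi_\xi]$ as a linear combination of the functionals $v\mapsto\ix{Z_{ij}\,\chi_i\,v}$ with coefficients $c_{ij}$, and conclude by inverting the resulting nearly-diagonal $2N\times 2N$ system for small $\omega$, exactly as in the paper's relation \equ{pot}. Your handling of the $\partial_{\xi}\phi_\xi$ term through the differentiated orthogonality constraint \equ{nl2} is a fine (slightly more self-contained) alternative to the paper's direct use of $\|\partial_\xi\phi_\xi\|_*=O(\omega^{2/3})$; the only imprecision is that $\partial_{\xi}\(Z_{ij}\,\chi_i\)$ is a measure with a singular part on the sphere $\partial B_R(\xi_i)$ rather than a bounded function, but since its total mass is $O(1)$ and $\|\phi_\xi\|_*=O(\omega^{2/3})$, the $O(\omega^{2/3})$ estimate stands.
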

\begin{proof} We have already noticed in Lemma~\ref{nonlinear} that the numbers $c_{i3}$ are all equal to zero. On the other hand, we have that
\begin{multline}
\partial_{\xi_{ij}}\Lambda =D\mathsf J\,[W_\xi+\phi_\xi]\cdot\partial_{\xi_{ij}}(W_\xi+ \phi_\xi)=\sum_{k,\,\ell} c_{kl} \ix{\partial_{\xi_{ij}}(W_\xi+ \phi_\xi)\,Z_{k\ell}\,\chi_\ell}\\
=c_{ij}\,\left (\,\ix{|Z_{ij}|^2\,\chi_i}\,\right )\,+\,\Big(\sum_{(k,\,\ell) \ne (i,j) } c_{k\ell}\Big)\,O(\omega^{2/3})\;.
\label{pot}\end{multline}
{}From here the assertion of the lemma readily follows, provided that $\omega$ is sufficiently small.
\qed\end{proof}

\begin{remark}\label{remi}
An important observation that follows from the rotation invariance of the equation is the following. Assume that the point $\xi$ is such that $\xi_\ell = (\xi_{\ell 1},0)\neq(0,0)$ for some $\ell\in\{1,\ldots N\}$. Then if
\[
\partial_{\xi_{kj}}\Lambda (\xi) = 0 \foral k=1,\ldots N\,,\;j=1,2, \quad (k,j)\ne (\ell,2)\;,
\]
it follows that actually $\xi$ is a critical point of $\Lambda$. Indeed, differentiating in $\alpha$ the relation
$\Lambda(e^{i\,\alpha }\,\xi) = \Lambda(\xi)$ we get
\[
0=\sum_{k=1}^N \partial_{\xi_k} \Lambda(\xi) \cdot i\,\xi_k\,=\,\partial_{\xi_\ell}\Lambda(\xi) \cdot i\,\xi_\ell\,=\,-\,\xi_{\ell 1}\,\partial_{\xi_{\ell 2}}\Lambda(\xi)\;,
\]
and the result follows. \end{remark}

\section{Proofs of Theorems \ref{teo1}-\ref{teo2}}\label{Sec:Proofs}

Let us consider the solution $\phi_\xi$ of \equ{nl1}-\equ{nl2}, \emph{i.e.} of the problem
\[
\begin{array}{c}\label{nl11}\displaystyle
\mathsf L[\phi_\xi ] = -\mathsf E -\mathsf N[\phi_\xi] + \sum_{i=1}^N\sum_{j=1}^3 c_{ij}(\xi)\,Z_{ij}\,\chi_i\\ \\
\displaystyle\lim_{|x|\to +\infty}\phi_\xi(x)=0
\end{array}
\]
given by Lemma \ref{nonlinear}. We will then get a solution of Problem \equ{1}-\equ{2}, of the desired form $u= W_\xi +\phi_\xi$, inducing the ones for Theorems \ref{teo1} and \ref{teo2}, if we can adjust $\xi$ in such a way that
\[
c_{ij}(\xi) = 0 \quad \foral\;i=1,\,2\ldots N,\;j=1,\,2,\,3\;.
\]
According to Lemma~\ref{Lem:CriticalLambda}, this is equivalent to finding a critical point of the functional
\[
\Lambda(\xi)\;:=\;\mathsf J\,[W_\xi+\phi_\xi]\;.
\]
We expand this functional as follows:
\[
\mathsf J\,[W_\xi] = \mathsf J\,[W_\xi+\phi_\xi] - D\mathsf J\,[W_\xi+\phi_\xi]\cdot\phi_\xi+ \frac 12\int_0^1 D^2\mathsf J\,[W_\xi+ (1-t)\,\phi_\xi]\cdot(\phi_\xi\,,\,\phi_\xi)\;dt\,.
\]
By definition of $\phi_\xi$ we have that $D\mathsf J\,[W_\xi+\phi_\xi]\cdot\phi_\xi=0$. On the other hand, using Lemma~\ref{nonlinear}, we check directly, out of the definition of $\phi_\xi$, that
\[
D^2\mathsf J\,[W_\xi+ (1-t)\,\phi_\xi]\cdot(\phi_\xi\,,\,\phi_\xi) = O(\omega^{4/3})
\]
uniformly on points $\xi_i$ satisfying constraints \equ{constraints}. Hence, from expansion \equ{ap1} we obtain that
\[\label{expansion}
\Lambda(\xi)=\sum_{i=1}^N \la_i^{5-p}\,\mathsf e_*\,-\,\mathcal V_m^\omega (\xi)\,+\,O(\omega^{4/3})\;.
\]

We claim that this expansion also holds in the $C^1$ sense. Let us first observe that
\[
\ix{\mathsf E\,\partial_{\xi} W_\xi}= \nabla_\xi \mathsf J\,[W_\xi]\quad\mbox{and}\quad\partial_{\xi_{ij}} W_{\xi} = Z_{ij}\;.
\]
Then, testing equation \equ{nl1} against $Z_{ij}$, we see that
\[
\ix{\big(\mathsf N[\phi]\,Z_{ij} + \mathsf L[Z_{ij}]\big)\,\phi}= - \ix{\mathsf E\,Z_{ij}}+ \sum_{kl} c_{kl} \ix{Z_{ij}\,Z_{kl}\,\chi_i}\;.
\]
Next we observe that
\[
\big\|\,\mathsf L[Z_{ij}]\,\big\|_{**} = O ( \omega^{2/3})\quad\mbox{and}\quad\ix{Z_{ij}\,Z_{kl}\,\chi_i}= O ( \omega^{2/3}) \quad\hbox{if } (i,j)\ne (k,l)\;.
\]
By lemma~\ref{nonlinear}, $\|\phi_\xi\|_* = O ( \omega^{2/3})$, and so we get
\[
\big(\,1 +O(\omega^{2/3})\,\big)\,c_{ij}\;=\;O ( \omega^{4/3})\,+\,\partial_{\xi_{ij}} \mathsf J\,[W_\xi]\;.
\]
Hence, according to relation \equ{pot}, we obtain
\[
\(\,\mathrm I_{\,3N} +O(\omega^{2/3})\,\)\,\nabla_\xi \Lambda (\xi)\,=\,\nabla_\xi \mathsf J\,[W_\xi]\,+\,O ( \omega^{4/3})
\]
where $\nabla_\xi \mathsf J\,[W_\xi]$ has been computed in \equ{ap2}. Summarizing, we have found that
\[
\nabla_\xi\Lambda(\xi) = -\nabla_\xi \mathcal V_m^\omega (\xi_1,\ldots\xi_N)\,+\,O(\omega^{2/3})\;.
\]
Therefore, setting $\xi = \omega^{2/3}\,\zeta$ with $\zeta= (\zeta_1, \ldots\zeta_N)$ and defining $\Gamma(\zeta):=\Lambda(\xi)$ on $\mathfrak{B}_\mu := \left\{\zeta\in \R^{2N}\,:\,\mbox{\equ{constraints1} holds}\right\}$, we have shown the following result.
\begin{proposition}\label{Prop:Summary} With the above notations, we have that
\[\label{pata}
\begin{array}{c}
\displaystyle\Gamma ( \zeta)=\sum_{i=1}^N \la_i^{5-p}\,\mathsf e_*\,-\,\omega^{2/3}\,\mathcal V_m( \zeta )\,+\,O(\omega^{4/3})\\
\displaystyle\nabla \Gamma ( \zeta) =\,-\,\nabla \mathcal V_m( \zeta )\,+\,O(\omega^{2/3})
\end{array}\]
uniformly on $\zeta$ satisfying \equ{constraints1}. Here the terms $O(\cdot)$ are continuous functions of~$\zeta$ defined on $\mathfrak{B}_\mu$.\end{proposition}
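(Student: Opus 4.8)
The plan is to read Proposition~\ref{Prop:Summary} as the closing bookkeeping step of the Lyapunov--Schmidt reduction: both displays follow by combining the energy estimate \equ{ap1} and its $C^1$ counterpart \equ{ap2} for the approximate solution $W_\xi$ with the size and differentiability bounds on the correction $\phi_\xi$ from Lemmas~\ref{linear}, \ref{derivative} and \ref{nonlinear}, and then rescaling to the variable $\zeta$ via \equ{Eqn:ScalingOmega}, where $\Gamma(\zeta):=\Lambda(\xi)=\mathsf J[W_\xi+\phi_\xi]$ and $\xi$ is tied to $\zeta$ through \equ{constraints1}. I would establish the two statements separately, the value expansion first, as it is comparatively soft, and the gradient expansion second, where the real work lies.

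For the value expansion, I would Taylor--expand the functional $\mathsf J$ of \equ{J} along the segment joining $W_\xi$ to the genuine configuration $W_\xi+\phi_\xi$,
\[
\mathsf J[W_\xi]=\mathsf J[W_\xi+\phi_\xi]-D\mathsf J[W_\xi+\phi_\xi]\cdot\phi_\xi+\tfrac12\int_0^1 D^2\mathsf J[W_\xi+(1-t)\phi_\xi]\cdot(\phi_\xi,\phi_\xi)\,dt\,.
\]
The linear term vanishes: since $\phi_\xi$ solves the projected problem \equ{nl1}, the differential $D\mathsf J[W_\xi+\phi_\xi]$ equals a combination $\sum_{i,j}c_{ij}Z_{ij}\chi_i$, and testing it against $\phi_\xi$ gives zero by the orthogonality conditions \equ{nl2}. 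The quadratic term is $O(\|\phi_\xi\|_*^2)$, hence $O(\omega^{4/3})$ by the bound $\|\phi_\xi\|_*\le C\,\omega^{2/3}$ of Lemma~\ref{nonlinear}, once one verifies that the quadratic form $D^2\mathsf J$ is controlled in the relevant norms on the ball $\mathcal B$. Inserting \equ{ap1} yields $\Lambda(\xi)=\sum_i\lambda_i^{5-p}\,\mathsf e_*-\mathcal V_m^\omega(\xi)+O(\omega^{4/3})$, and the identity $\mathcal V_m^\omega(\xi)=\omega^{2/3}\,\mathcal V_m(\zeta)$ then gives the first line of the Proposition.

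The gradient expansion is more delicate. I would first invoke the differentiability of $\phi_\xi$ in $\xi$, furnished by Lemma~\ref{derivative} together with the implicit function theorem and the bound $\|\partial_\xi\phi_\xi\|_*=O(\omega^{2/3})$, so that $\Lambda$ is of class $C^1$ and formula \equ{pot} of Lemma~\ref{Lem:CriticalLambda} expresses $\partial_{\xi_{ij}}\Lambda$ through the Lagrange multipliers $c_{ij}$. The core step is to estimate these multipliers: testing the projected equation \equ{nl1} against $Z_{ij}$ and integrating by parts, and using $\|\mathsf L[Z_{ij}]\|_{**}=O(\omega^{2/3})$, the off-diagonal smallness $\int_{\R^3}Z_{ij}Z_{kl}\chi_i\,dx=O(\omega^{2/3})$ for $(i,j)\neq(k,l)$, the bound $\|\phi_\xi\|_*=O(\omega^{2/3})$, and the relation $\int_{\R^3}\mathsf E\,\partial_\xi W_\xi\,dx=\nabla_\xi\mathsf J[W_\xi]$ with $\partial_{\xi_{ij}}W_\xi=Z_{ij}$, one arrives at a near-diagonal linear system $(\mathrm I_{3N}+O(\omega^{2/3}))\,\nabla_\xi\Lambda=\nabla_\xi\mathsf J[W_\xi]+O(\omega^{4/3})$. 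Inverting the near-identity matrix and substituting \equ{ap2} gives $\nabla_\xi\Lambda(\xi)=-\nabla_\xi\mathcal V_m^\omega(\xi)+O(\omega^{2/3})$, and rescaling to $\zeta$ produces the second line.

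I expect the main obstacle to be exactly this control of the multipliers $c_{ij}$ and the inversion of the resulting near-identity system, since it must simultaneously exploit the \emph{a priori} estimate of Lemma~\ref{linear}, the differentiability bound of Lemma~\ref{derivative}, and the cross-term cancellations that render the system diagonally dominant; the value expansion, by contrast, needs only the variational characterization of $\phi_\xi$ and a crude quadratic bound. Finally, uniformity of every $O(\cdot)$ term over $\zeta\in\mathfrak{B}_\mu$ follows because all constants in Lemmas~\ref{linear}--\ref{nonlinear} are independent of $\xi$ on the admissible set \equ{constraints}, while their continuity in $\zeta$ is inherited from the continuous dependence of $\phi_\xi$ on $\xi$ asserted in Lemma~\ref{nonlinear}.
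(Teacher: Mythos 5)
Your proposal is correct and coincides with the paper's own proof essentially step by step: the same Taylor expansion of $\mathsf J$ about $W_\xi+\phi_\xi$ (vanishing linear term by the variational characterization of $\phi_\xi$, quadratic term $O(\omega^{4/3})$ via Lemma~\ref{nonlinear}) combined with \equ{ap1} for the value expansion, and the same estimation of the multipliers $c_{ij}$ by testing \equ{nl1} against $Z_{ij}$, yielding the near-identity system $\(\mathrm I_{3N}+O(\omega^{2/3})\)\nabla_\xi\Lambda=\nabla_\xi\mathsf J\,[W_\xi]+O(\omega^{4/3})$, inverted and combined with \equ{ap2} and the rescaling \equ{Eqn:ScalingOmega} for the gradient expansion. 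The uniformity and continuity assertions are handled exactly as in the paper, so there is nothing to correct.
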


\subsection{Proof of Theorem \ref{teo1}}\label{Sec:Proof1}

If $\mu>0$ is fixed large enough, we have that
\[
\inf_{\mathfrak{B}_\mu } \mathcal V_m\,<\,\inf_{\partial\mathfrak{B}_\mu }\mathcal V_m\;.
\]
Fixing such a $\mu$, we get from Proposition~\ref{Prop:Summary} that, for all sufficiently small $\omega$,
\[
\sup_{ \mathfrak{B}_\mu } \Gamma\,>\,\sup_{ \partial\mathfrak{B}_\mu }\Gamma\;.
\]
so that the functional $\Lambda$ has a maximum value somewhere in $\omega^{2/3}\,\mathfrak{B}_\mu$, which is close to a maximum value of $\mathcal V_m^\omega$. This value is achieved at critical point of $\Lambda$, and hence a solution with the desired features exists. The construction is concluded.\newline\qed

\subsection{Proof of Theorem \ref{Thm:Lagrange}}\label{Sec:Proof-Lagrange}

When $(\zeta_{1},\ldots\zeta_N)$ is a regular polygon with $\zeta_j$ given by~\equ{polygon} and all masses are equal, the system is invariant under the rotation defined~by
\[
x=\underbrace{(x^1,x^2,x^3)}_{\in\R^3}\approx\underbrace{((x^1+i\,x^2),\,x^3)}_{\in\C\times\R}\mapsto\(e^{2\,i\,\pi/N}(x^1+i\,x^2),\,x^3\)=:\mathcal R_N\,x\;.
\]
We can therefore pass to the quotient with respect to this group of invariance and look for solutions $u$ which are invariant under then action of $\mathcal R_N$ and moreover symmetric symmetric with respect to the reflections $(x^1,x^2,x^3)\mapsto(x^1,-x^2,x^3)$ and $(x^1,x^2,x^3)\mapsto(x^1,x^2,-x^3)$. Here we assume that $(\zeta_{1},\ldots\zeta_N)$ is contained in the plane $\{x^3=0\}$ and $\zeta_1=(r,0,0)$. Altogether this amounts to look for critical points of the functional
\[
\mathsf J_1[u] = \frac 12 \int_{\Omega_1}{|\nabla u|^2}\;dx - \frac 1{p+1}\int_{\Omega_1}{\big( u-\la_1 + \tfrac12\,\omega^2\,|x'|^2\big)^{p+1}_+\,\chi_1}\;dx
\]
where $\Omega_1=\{x=(x',x^3)\in\R^3\approx\C\times\R\,:\,x'=r\,e^{i\,\theta}\;\mbox{s.t.}\;-\tfrac\pi N<\theta<\tfrac\pi N\}$ and $u\in H^1(\Omega)$ is invariant under the two above reflections and such that $\nabla u\cdot n=0$ on $\partial\Omega\setminus\{0\}\times\R$ and $\chi_1$ is the characteristic function of the support of $\rho_1=\big( u-\la_1 + \tfrac12\,\omega^2\,|x'|^2\big)^{p+1}_+\,\chi_1$ in $\Omega_1$. Here $n=n(x)$ denotes the unit outgoing normal vector at $x\in\partial\Omega_1$. With $\mathsf J$ defined by \eqref{J}, it is straightforward to see that $\mathsf J\,[u]=N\,\mathsf J_1[u]$ if $u$ is extended to $\R^3$ by assuming that $u(\mathcal R_N\,x)=u(x)$. With these notations, we find that
\[
\mathcal V_m(\zeta_{1},\ldots\zeta_N) = N\,m_*\,\(\frac{a_N}{4\pi}\,\frac{m_*}r +\frac 12\,r^2\)\;.
\]
The proof goes as for Theorem \ref{teo1}. Because of the symmetry assumptions, $c_{1j}=0$ if $j=2$ or $3$. Details are left to the reader. \qed

\subsection{Proof of Theorem \ref{teo2}}\label{Sec:Proof2}

We look for a critical point of the functional $\Gamma$ of Proposition~\ref{Prop:Summary} in a neighborhood of a critical point $\zeta$ of $\mathcal V_m$, which is nondegenerate up to rotations. With no loss of generality, we may assume that $\zeta_1\ne 0$, $\zeta_{12}=0$ and denote by $\tilde{\mathcal V}_m$ the restriction of $\mathcal V_m$ to $(\R\times\{0\})\times(\R^2)^{N-1}\ni\zeta$. Similarly, we denote by $\tilde\Gamma$ the restriction of $\Gamma$ to $(\R\times\{0\})\times(\R^2)^{N-1}$.

By assumption, $\zeta$ is a non-degenerate critical point of $\tilde{\mathcal V}_m$, \emph{i.e.} an isolated zero of $\nabla\tilde{\mathcal V}_m$. Besides, its local degree is non-zero. It follows that on an arbitrarily small neighborhood of that point, the degree for $\nabla \tilde \Gamma$ is non-zero for all sufficiently small $\omega$. Hence there exists a zero $\zeta^\omega\in(\R\times\{0\})\times(\R^2)^{N-1}$ of $\nabla\tilde\Gamma$ as close to~ $\zeta$ as we wish. From the rotation invariance, it follows that $\zeta^\omega$ is also a critical point of $\Gamma$. The proof of Theorem~\ref{teo2} is concluded. \qed

\appendix{} \renewcommand{\thesection}{Appendix \Alph{section}}
\section{Facts on Relative Equilibria}\label{Appendix:RE}

In this Appendix we have collected some results on the $N$-body problem introduced in Section~\ref{Sec:Intro} which are of interest for the proofs of Theorems~\ref{teo1}-\ref{teo2}, with a list of relevant references.

\subsection*{Non-degeneracy of relative equilibria in a standard form}

Relative equilibria are by definition critical points of the function $\mathcal V_m : \R^{2N}\to\R$ defined~by
\[
\mathcal V_m(\zeta_1,\zeta_2,\ldots\zeta_N)= \frac{1}{8\pi}\sum_{i\neq j=1}^N\frac{m_i\,m_j}{|\zeta_j-\zeta_i|} + \frac12\,\sum_{i=1}^Nm_i\,|\zeta_i|^2\,.
\]
Here we assume that $N\ge2$, and $m_i>0$, $i=1,\ldots N$ are given parameters.

\medskip Following Smale in \cite{smale}, we can rewrite this problem as follows. Let us consider the $(2N-3)$-dimensional manifold
\[
S_m:=\Big\{ q= (q_1,\ldots q_N)\in\R^{2N}\,:\,\sum_{i=1}^Nm_i\,(q_i,\tfrac12\,|q_i|^2)=(0,1)\,,\;q_i\neq q_j\hbox{ if }i\neq j \Big\}\,.
\]
The problem of finding critical points of the functional
\[
U_m(q_1,\ldots q_N)= \frac{1}{8\pi}\sum_{i\neq j=1}^N\frac{m_i\,m_j}{|q_j-q_i|}
\]
on $S_m$ is equivalent to that of relative equilibria; see for instance \cite{elmabsout}. Let us give some details. Let $\bar q$ be a critical point of $U_m$ on $S_m$. Then by definition, there are Lagrange multipliers $\la\in\R$ and $\mu\in\R^2$ for which
\[\label{qq}
-\frac{1}{8\pi }\sum_{i\neq j}^N\frac{m_i\,m_j}{|\bar q_i-\bar q_j|^3}\,(\bar q_i-\bar q_j)\,=\lambda\,m_j\,\bar q_j + m_j\,\mu\quad \forall\;j=1,\ldots N\;.
\]
First, adding in $j$ the above relations and using that $M=\sum_{j=1}^N m_j >0$ we obtain that $\mu=0$. Second, taking the scalar product of $\R^2$ against $\bar q_j$ and then adding in $j$, we easily obtain that $U_m(\bar q) = \la$. From here it follows that the point $\bar \zeta = \la^{1/3}\,\bar q$ is a critical point of the functional $\mathcal V_m$, hence a relative equilibrium.

With the reparametrization of $\R^{2N}$ given by
\[\label{coord}
\zeta (\alpha,p,q) = (\zeta_1,\ldots \zeta_N) = (\alpha\,q_1 + p,\ldots \alpha\,q_N + p )\;,\quad (\alpha,p,q)\in \R\times\R^2\times S_m\;,
\]
the Hessian matrix of $\mathcal V_m$ at the critical point $\bar\zeta=\zeta(\la^{1/3},0,\bar q)$ found above is represented as the block matrix
\[\label{cord}
D^2\mathcal V_m(\bar\zeta)=\left( \begin{array} {rrc}
2 & & \\
& M\,{\mathrm I_2} & \\
& & \la^{-1/3}\,D^2_{ S_m}\!U_m(\bar q)
\end{array}\right)
\]
where ${\mathrm I_2}$ is the $2\times 2$ identity matrix and $D^2_{ S_m}$ represents the second covariant derivative on $S_m$. Reciprocally, we check that a critical point $\bar \zeta=(\zeta_j)_{j=1}^N$ of $\mathcal V_m$ necessarily satisfies $\sum_{j=1}^N\,m_j\,\zeta_j =0$. Defining $
\bar q = \big(\frac 12\sum_{j=1}^N m_j\,|\zeta_j|^2\big)^{-1/2}\,\bar \zeta$, we readily check that $\bar q$ is a critical point of $U_m$ in $S_m$.

\bigskip Any rotation $e^{i\,\alpha}\,\bar q$ of a critical point $\bar q$ of $U_m$ on $S_m$ is also a critical point. We say that two such critical points are {\em equivalent} in $S_m$. Let us denote by~$\mathcal S_m$ the quotient manifold of $S_m$ by this equivalence relation. On $\mathcal S_m$, critical points of the potential $U_m$ yield critical points of $U_m$ on $S_m$ and hence equivalence classes of critical points $e^{i\,\alpha}\,\zeta$ for~$\mathcal V_m$ using the reparametrization.

A critical point $\tilde q$ of $U_m$ on $\mathcal S_m$ is said to be {\em non-degenerate} if the second variation of $U_m$ at $\tilde q$ is non-singular. Let us assume that $\tilde q_\ell \ne 0$, with either $\ell=1$, or $\ell=2$ if $\tilde q_1=0$. Then there is a unique representative $\bar q$ of this class of equivalence for which $\bar q_{\ell 2} =0$. It is a routine verification to check that $\bar q$ is then a critical point of $U_m$ on the $(2N-4)$-dimensional manifold
\[
\mathsf S_m := \big\{ q\in S_m\;:\;q_\ell \ne 0\;\mbox{as above}\,,\; q_{\ell 2} =0\big\}\,.
\]
Moreover, the second derivative of $U_m$ on $\mathcal S_m$ at $\tilde q$ is non-degenerate if and only if $D^2_{\mathsf S_m}\!U_m (\bar q)$ is non-singular. Because of the expression of $D^2\mathcal V_m(\bar\zeta)$, we see that $\bar \zeta$ is non-degenerate as a critical point of $\mathcal V_m$ on the space of $\zeta\in \R^{2N}$ with $q_{\ell 2} = 0$, which is the notion of \emph{non-degeneracy up to rotations} of a relative equilibrium that we have used in this paper. Finally we define the {\em index} of a non-degenerate relative equilibrium $\bar \zeta$ as the number of negative eigenvalues of $D^2_{\mathsf S_m}\!U_m (\bar q) $.

\subsection*{Some results on classification of relative equilibria}

For simplicity, we will assume that masses are all different: for any $i$, $j=1,\,\ldots N$, if $m_i=m_j$, then $i=j$. This is the generic case.

The cases $N=2$, $3$ are well known; see for instance \cite{meyeretal}. For $N=2$, the only class of critical points is such that
\[
|\zeta_1-\zeta_2|=\(\tfrac M{4\pi}\)^{1/3}\quad\mbox{and}\quad m_1\,\zeta_1+m_2\,\zeta_2=0\quad\mbox{with}\;M=m_1+m_2\;.
\]
For $N=3$, there are two types of solutions, the Lagrange and the Euler solutions. The \emph{Lagrange solutions} are such that their center of mass is fixed at the origin, the masses are located at the vertices of an equilateral triangle, and the distance between each point is $(M/(4\pi) )^{1/3}$ with $M=m_1+m_2+m_3$. They give rise to two classes of solutions corresponding to the two orientations of the triangle when labeled by the masses. The \emph{Euler solutions} are made of aligned points and provide three classes of critical points, one for each ordering of the masses on the line.

In the case $N\geq 4$, the classes of solutions for which all points are collinear still exist (see \cite{MR1503509}) and are known as the \emph{Moulton solutions}. But the configuration of relative equilibria where all particles are located at the vertices of a regular $N$-polygon exists if and only if all masses are equal; see \cite{macmillan,williams,perkowalter,elmabsout,xiezhang}. Various classification results which have been obtained by Palmore are summarized below.
\begin{theorem}[\cite{MR0363076,MR0413647,MR0413646,MR0445539,MR0420713}]\label{palmore} We have the following multiplicity results.
\begin{enumerate}
\item[{\rm (a)}] For $N\geq 3$, the index of a relative equilibrium is always greater or equal than $N-2$. This bound is achieved by Moulton's solutions.
\item[{\rm (b)}] For $N\geq 3$, there are at least $\mu_i(N):=\binom Ni (N-1-i)\,(N-2)\,!$ distinct relative equilibria in $\mathcal S_m$ of index $2N-4-i$ if $U_m$ is a Morse function. As a consequence, there are at least
\[
\sum_{i=0}^{N-2}\mu_i(N)=[2^{N-1}(N-2)+1]\,(N-2)\,!
\]
distinct relative equilibria in $\mathcal S_m$ if $U_m$ is a Morse function.
\item[{\rm (c)}] For every $N\geq 3$ and for almost all masses $m\in\R^N_+$, $U_m$ is a Morse function.
\item[{\rm (d)}] There are only finitely many classes of relative equilibria for every $N\geq 3$ and for almost all masses $m=(m_i)_{i=1}^N\in\R^N_+$.
\end{enumerate}
\end{theorem}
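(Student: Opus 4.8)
The plan is to read Theorem~\ref{palmore} as a package of statements in Morse theory for the function $U_m$ on the reduced shape space $\mathsf S_m$ (equivalently $\mathcal S_m$), whose critical points are exactly the relative equilibria by the reduction carried out above. The structural ingredient underlying everything is the topology of $\mathsf S_m$: it is the space of $N$ labelled distinct points of the plane taken modulo the orientation-preserving similarity group (translations, rotations and dilations). Its cohomology is therefore obtained from Arnold's computation of the cohomology of the ordered configuration space $\mathrm{Conf}_N(\C)$, whose Poincaré polynomial is $\prod_{k=1}^{N-1}(1+k\,t)$, after dividing out the homotopically circle-like similarity group. I would first record the resulting Betti numbers $b_j(\mathsf S_m)$, together with the fact that $U_m$ is \emph{proper} on $\mathsf S_m$: the normalization defining $S_m$ bounds the configuration, while $U_m\to+\infty$ at every collision, so $\{U_m\le c\}$ is compact.

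For part (a) I would estimate the reduced Hessian $D^2_{\mathsf S_m}U_m(\bar q)$ directly. The aim is to exhibit, at an arbitrary relative equilibrium, an explicit $(N-2)$-dimensional subspace of $T_{\bar q}\mathsf S_m$ on which the second variation is sign-definite, forcing the index to be at least $N-2$. Sharpness then comes from computing the Hessian at a collinear (Moulton) configuration: there the variations split into an along-the-line part, for which the collinear configuration is a nondegenerate extremum of the one-dimensional restricted problem and which accounts for exactly $N-2$ of the relevant directions, and a transverse part of opposite character. The same computation enumerates these configurations: there are $N!/2$ of them, one per ordering of the masses on a line up to reflection, matching $\mu_{N-2}(N)=\tfrac12\,N!$.

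Part (b) is where I expect the real difficulty, and it is the main obstacle. One applies the Morse inequalities to $U_m$ on $\mathsf S_m$ under the standing hypothesis that $U_m$ is a Morse function. The Betti numbers $b_j(\mathsf S_m)$ give immediate lower bounds, but these are \emph{not} sharp enough to reach the explicit counts $\mu_i(N)=\binom Ni(N-1-i)(N-2)!$; already for $N=3$ the naive sum of Betti numbers undercounts. Reaching the stated multiplicities requires the finer information carried by the full cell/cup-product structure of $H^\ast(\mathsf S_m)$, combined with the exact enumeration of the collinear family from part (a) as the extreme-index stratum. Once the refined per-index bounds are in hand, the total count is the combinatorial identity $\sum_{i=0}^{N-2}\binom Ni(N-1-i)=2^{N-1}(N-2)+1$, which I would verify separately; it yields the factor $[2^{N-1}(N-2)+1]$ in front of $(N-2)!$.

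Finally, parts (c) and (d). For (c) I would use a parametrized transversality (Sard--Smale) argument, treating the masses $m\in\R^N_+$ as parameters: after fixing a mass-independent model of the configuration space and letting $m$ vary only the mass metric and the potential, one shows that the universal gradient section is transverse to the zero section, so that for almost every $m$ the section $\nabla U_m$ is transverse, i.e. $U_m$ is Morse. For (d), genericity from (c) makes all critical points nondegenerate, hence isolated; properness of $U_m$ together with the fact that near a collision the dominant singular term of $\nabla U_m$ cannot be balanced by the bounded constraint forces confines the critical set to a compact subset of $\mathsf S_m$. An isolated critical set inside a compact set is finite, which gives (d).
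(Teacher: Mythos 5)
First, a point of comparison: the paper does not prove Theorem~\ref{palmore} at all. It is stated in \ref{Appendix:RE} purely as a survey of Palmore's results, with the citations \cite{MR0363076,MR0413647,MR0413646,MR0445539,MR0420713}; the appendix only supplies the reduction (relative equilibria as critical points of $U_m$ on $S_m$, the quotient $\mathcal S_m$, non-degeneracy up to rotations) needed to state it. So your proposal must be judged on its own merits. Its framework is the right one --- Morse theory for $U_m$ on the reduced space, Betti numbers read off from Arnold's $\prod_{k=1}^{N-1}(1+kt)$ after dividing by the $(1+t)$ of the freely acting rotation circle, properness of $U_m$ --- and several of your subsidiary observations are correct: Moulton solutions number $N!/2=\mu_{N-2}(N)$, and the total Betti number $\sum_j b_j=N!/2$ indeed falls short of $[2^{N-1}(N-2)+1]\,(N-2)!$. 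Parts (c) and (d) are sketched along viable lines (parametrized Sard--Smale in the masses; isolation plus compactness of the normalized critical set), though the confinement of critical points away from collisions, which you dispatch in one clause, is Shub's compactness lemma and needs its own proof.

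The genuine gaps are in (a) and (b). For (b), which you rightly call the main obstacle, the repair you propose cannot work: for a \emph{Morse} function the cohomology ring yields nothing beyond the Morse inequalities $c_j\ge b_j$ and their alternating-sum refinements; cup-length arguments improve critical-point counts only in the degenerate (Lusternik--Schnirelmann) setting. The extra input in Palmore's counting is geometric, not finer topology: one inserts into the Morse relation $\sum_j c_j\,t^j=P(t)+(1+t)\,Q(t)$, $Q\ge0$, the exact number $N!/2$ \emph{and} the exact index of the collinear configurations, whose excess $N!/2-(N-1)!$ over the top Betti number forces critical points at the remaining indices; the index computation at Moulton configurations (transverse Hessian definite, exactly $N-2$ eigenvalues of each sign) is itself a theorem (Conley's computation), which your sketch assumes in passing --- ``a transverse part of opposite character'' --- rather than proves. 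Your proposal contains neither this input nor the propagation argument, so (b) remains unproved. For (a), there is a sign problem that the phrase ``sign-definite'' cannot hedge: with the appendix's literal definition (index $=$ number of negative eigenvalues of $D^2_{\mathsf S_m}U_m$), statement (a) is \emph{false} --- for $N=3$, say with equal masses, the two Lagrange configurations are the global minima of $U_m$ on the shape space (a direct computation shows their $U_m$ is smaller than that of the Euler configurations, and these five are all the critical points), hence they have index $0<N-2=1$. Palmore's index is taken with respect to $-U_m$ (equivalently the amended potential), i.e.\ it is the co-index $2N-4-\mathrm{ind}_{U_m}$. Since your Morse inequalities are necessarily those of the proper, bounded-below function $U_m$, every per-index assertion in (a) and (b) must be translated through $i\leftrightarrow 2N-4-i$, and the $(N-2)$-dimensional subspace you would need in (a) is one on which $D^2_{\mathsf S_m}U_m$ is \emph{positive} definite at an arbitrary critical point; your sketch never makes this choice and never constructs the subspace.
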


\par\bigskip\noindent{\small{\sl Acknowledgments.\/} This project is part of the scientific program of the MathAmSud network \emph{NAPDE} and of the ANR funded research project\emph{CBDif-Fr}. It has also been supported by grants Fondecyt 1070389, Fondo Basal CMM and Anillo ACT 125.
\newline\scriptsize{\copyright~2010 by the authors. This paper may be reproduced, in its entirety, for non-commercial~purposes.}}

\def\cprime{$'$}

\end{document}